\newtheorem*{thm*}{Theorem}
\newtheorem{thm}{Theorem}[section]
\newtheorem{lem}[thm]{Lemma}
\newtheorem{prop}[thm]{Proposition}
\theoremstyle{definition}
\newtheorem{defn}[thm]{Definition}
\theoremstyle{remark}
\newtheorem{remark}[thm]{Remark}
\numberwithin{equation}{section}
\newcommand{\p}{\ensuremath{\partial}}
\newcommand{\dd}[2][]{\ensuremath{\dfrac{d^{#1}}{d{#2}^{#1}}}}
\newcommand{\abs}[2][]{\ensuremath{\left|{#2}\right|_{#1}}}
\newcommand{\norm}[2][]{\ensuremath{\left\|{#2}\right\|_{#1}}}
\begin{document}

\title[The Multiplier Problem for Scalar ODE's]{The Multiplier Problem of the Calculus of Variations for Scalar Ordinary Differential Equations}

\author{Hardy Chan}
\thanks{This work is partially supported by the Four Year Fellowship of The University of British Columbia. The author is deeply indebted to his M.Phil. advisor Prof. Kai-Seng Chou for suggesting this problem, as well as carefully reviewing the first draft of this article and giving numerous valuable suggestions. The author also thanks Prof. Kai-Seng Chou for his continuous support, while and after pursuing his master degree at The Chinese University of Hong Kong.} 
\address{Department of Mathematics, The University of British Columbia}



\begin{abstract}
In the inverse problem of the calculus of variations one is asked to find a Lagrangian and a multiplier so that a given differential equation, after multiplying with the multiplier, becomes the Euler--Lagrange equation for the Lagrangian.  An answer to this problem for the case of a scalar ordinary differential equation of order $2n, n\geq 2,$ is proposed.
\end{abstract}

\maketitle

\vspace{1cm}

\noindent {\bf Keywords} The inverse problem to the calculus of variations, variational multiplier, Euler--Lagrange operator, Fels' conditions\\
\noindent {\it 2000 Mathematics Subject Classification} Primary
53B50, 49N45, Secondary 35K25.

\tableofcontents

\section{Introduction}
\subsection{The Multiplier Problem}
Let $L(x,p_0,p_1,\dots, p_n)$ be a given smooth function. The Euler--Lagrange equation for the Lagrangian $L(x,u, u',\dots,u^{(n)})$ is given by
\begin{equation}\label{1.1}
EL[u](x):=\sum_{k=0}^n (-1)^k\dfrac{d^k }{dx^k}L_{p_k}(x,u(x),\dots,u^{(n)}(x))=0.
\end{equation}
This is a differential equation with leading term $L_{p_np_n}(x,u,\dots,u^{(n)})u^{(2n)}$.  The multiplier problem in the calculus of variations asks a converse question.  Given a differential equation, say,
\begin{equation}\label{1.2}
u^{(2n)}(x)-f\left(x,u(x),\dots, u^{(2n-1)}(x)\right)=0.
\end{equation}
When is there a Lagrangian $L(x,p_0,p_1,\dots,p_n)$ of order $n$ and some positive multiplier $\rho(x)$ such that
\begin{equation}\label{1.3}
EL[u](x)= \rho(x)\big(u^{(2n)}-f(x,u(x),\dots,u^{(2n-1)}(x))\big)
\end{equation}
holds for all smooth functions $u$?

By comparing this equation with \eqref{1.1}, we know that the variational multiplier $\rho$ must have the same order of dependence as the Lagrangian $L$ when $L$ is non-degenerate, that is, $L_{p_np_n}(x,p_0,\dots,p_n)$ is non-zero.  Using the fact that \eqref{1.3} must hold for all functions, \eqref{1.3} is essentially a single partial differential equation for two unknowns $L$ and $\rho$ in $(x,p_0,\dots, p_n)$. (A precise formulation would require extra independent variables $p_{n+1},\dots,p_{2n}$ to take care of the higher derivatives of $u$.)

Later we will see the multiplier is determined by the Lagrangian and the Lagrangian satisfies an overdetermined system of linear partial differential equations when $n\geq 2$.  The solvability of this overdetermined system depends on the order, or now the number of independent variables in it.  When the order is two, that is, $n=1$, the equation for $L$ is not overdetermined and a variational multiplier always exists. This is a classical result of Darboux\cite{D}.  The next case $n=2$ comes much later.  Adapting the modern approach by Anderson--Thompson\cite{AT}, in 1996 Fels\cite{F} discovers two quantities involving the partial derivatives of $f$ and shows that their vanishing is necessary and sufficient for the existence of a variational multiplier for a fourth order equation.  Subsequently, the problem is solved for sixth and eighth order equations by Jur\'{a}\v{s}\cite{J1} in the spirit of \cite{F}.  Instead of two vanishing quantities, there are three and five quantities for sixth and eighth order equations respectively.  It is expected there are more and more vanishing quantities in the necessary and sufficient conditions as the order of the equation increases.

In this paper we propose a solution to the multiplier problem for all $2n$-order equations.  We will show that \eqref{1.3} possesses a solution if and only if $f$ assumes a certain integral form involving with some Euler--Lagrange operators (defined below) and $n+2$ many free functions as parameters.  Moreover, when this happens, $L$ and $\rho$ are also given in integral forms depending on these operators and  free functions.  This is different from all previous works.  The reason lies on the methodology; in contrast to sophisticated tools such as Cartan's equivalence method and variational bicomplex employed in  \cite{F} and \cite{J1}, our approach is completely elementary.

\subsection{Notations}\label{notations}
To state our results, we first fix some notations. Throughout the paper, unless otherwise specified, the subscript of a function denotes the highest order of derivatives that it depends on. For example, a function $L_2$ is understood to be $L_2(x,p_0,p_1,p_2)$. Occasionally, a function depending only on $x$ is given a subscript $-1$. We write $\p_k=\p_{p_k}$.

For a smooth function $F$ depending on $p_k$ ($k\geq 0$), we denote the first and second anti-derivatives of $F$ with respect to $p_k$ respectively by
$$\int^{p_k}F:=\int_0^{p_k}F(x,p_1,\dots,p_{k-1},\tilde{p}_{k},p_{k+1},\dots)\,d\tilde{p}_k$$
and
$$\int^{p_k}\!\!\!\int\!F:=\int_0^{p_k}\!\!\!\int_0^{\tilde{p}_k}\!F(x,p_1,\dots,p_{k-1},\hat{p}_k,p_{k+1},\dots)\,d\hat{p}_kd\tilde{p}_k.$$
Similar notations apply to anti-derivatives with respect to $x$.

\begin{defn}[The differential operators]\label{Def1.1}
The \emph{truncated total differential operators} are
\begin{equation*}\begin{split}
D_0&:=\p_x, \\
D_m&:=\p_x+p_1\p_0+p_2\p_1+\cdots+p_m\p_{m-1} \quad (m\geq1),
\end{split}\end{equation*}
and the \emph{$m$-th order Euler--Lagrange operator with $(n+1)$ terms} is defined by
\begin{equation*}
E_m^n := \sum_{k=0}^n(-1)^k D_m^k\p_k \quad (m,n\geq0).
\end{equation*}
(Note that the superscript in $D_m^k$ is a power while that in $E_m^n$ is not.)
\end{defn}

It is clear from the definition that $\p_n$ and $D_m$ do not commute if $m\geq n>0$. In fact, by the product rule,
$$\p_n D_m=D_m\p_n+\p_{n-1}.$$
This formula will be used freely throughout this paper.

Note that the Euler--Lagrange equation for the Lagrangian $L_n$ of order $n$ is given by $E^{n}_{2n}L_n=0$, see Lemma \ref{2.1} below.


In terms of these notations and according to Lemma \ref{2.1}, the multiplier problem is now expressed as, to find $L_n$ and $\rho_n$ such that
\begin{equation}\label{1.4}
\rho_n(p_{2n}-f_{2n-1})=E_{2n}^nL_n,
\end{equation}
holds for all $(x,p_0,p_1,\cdots,p_{2n})$.

\subsection{Motivations}
To gain some insight into this problem, let us consider the case $n=2$.  By expanding \eqref{1.4}($n=2$), we have
$$  \rho_2\big(p_4 - f_3 \big)=\Phi[L]+ p_4L_{p_2p_2},$$
where $\Phi[L]$ is a third order linear partial differential operator for $L$ given by
\begin{equation*}\begin{split}
\Phi[L]
&= L_{p_0} -\big(L_{xp_1} + p_1L_{p_0p_1} + p_2L_{p_1p_1} + p_3L_{p_1p_2}\big) \\
&\quad\, + \big(L_{xxp_2} + 2p_1L_{xp_0p_2} + 2p_2L_{xp_1p_2} + 2p_3L_{xp_2p_2} +p_1^2L_{p_0p_0p_2} + 2p_1p_2L_{p_0p_1p_2} \\
&\quad + 2p_1p_3L_{p_0p_2p_2} + p_2L_{p_0p_2} + p_2^2L_{p_1p_1p_2} + 2p_2p_3L_{p_1p_2p_2} + p_3L_{p_1p_2} + p_3^2L_{p_2p_2p_2} \big).
\end{split}\end{equation*}
By comparing the coefficient of $p_4$, we find that $\rho_2=L_{p_2p_2}$ and the remaining terms become a single linear partial differential equation for $L$:
$$\Phi[L] + f_3L_{p_2p_2}=0.$$
Observing that the independent variables in this equation are $(x,p_0,p_1,p_2,p_3)$ and yet we are looking for $L$ which depends only on $(x,p_0,p_1,p_2)$, there is another equation that $L$ must satisfy, namely, $L_{p_3}=0$.  Thus $L$ satisfies an overdetermined system of two linear partial differential equations.  In general, the Lagrangian $L_n, n\geq 3,$ satisfies $n$ many equations where one is the analog of $\Phi[L]=0$ and the others are $L_{p_{n+1}}=L_{p_{n+2}}=\cdots = L_{p_{2n-1}}=0.$ One needs to find out the constraints on $f_{2n-1}$ to solve \eqref{1.4}.

\medskip

\subsection{Main Results}

We first state our result for fourth order equations.

%
%
\begin{thm}\label{thmA}
Consider $n=2$ in \eqref{1.4}.  Suppose that \eqref{1.4}($n=2$) is solved for some $L_2$ and $\rho_2$.  There are functions $R_2, f_0, f_1$ and $N_1$ such that
\begin{equation}\label{1.5}
\left\{
\begin{array}{rcl}
f_3 & = & \displaystyle \p_2R_2\cdot p^2_3 + 2D_2R_2\cdot p_3 \\
& & \displaystyle - e^{R_2} \left(E^2_2 \int^{p_2} \!\!\! \int \! e^{-R_2} + f_1 p_2 -E^1_1 \int^{p_1} \!\!\! \int \! f_1+ f_0 \right)\ , \\
\rho_2 & = & e^{-R_2}\ , \\
L_2 & = & \displaystyle \int^{p_2} \!\!\! \int \! e^{-R_2} - \int^{p_1} \!\!\! \int \! f_1 + \int^{p_0} \! f_0 + D_2N_1 \ .
\end{array}
\right.
\end{equation}
Conversely, given any functions $R_2, f_0, f_1$ and $N_1$,   the functions $f_3, L_2,$ and $\rho_2$ defined in \eqref{1.5} solves \eqref{1.4}.
\end{thm}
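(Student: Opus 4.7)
The plan is to read off \eqref{1.4}($n=2$) order-by-order in the top variables $p_4$ and $p_3$. By Lemma~\ref{2.1} and the commutator $\p_k D_m = D_m \p_k + \p_{k-1}$, expansion of $E^2_4 L_2$ gives $E^2_4 L_2 = p_4\, L_{p_2 p_2} + \Phi[L_2]$, where $\Phi[L_2]$ depends only on $(x, p_0, p_1, p_2, p_3)$. Comparing the coefficient of $p_4$ in \eqref{1.4}($n=2$) forces $\rho_2 = L_{p_2 p_2}$. Since $\rho_2 > 0$, set $R_2 := -\log\rho_2$, so $\rho_2 = e^{-R_2}$ and $L_{p_2 p_2} = e^{-R_2}$ --- this is the first identity of \eqref{1.5}. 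The remaining $p_4$-free equation becomes $f_3 = -e^{R_2}\Phi[L_2]$.

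For the necessary direction, integrate $L_{p_2 p_2} = e^{-R_2}$ twice in $p_2$ to write
\[L_2 = \int^{p_2}\!\!\!\int e^{-R_2} + p_2\, A(x,p_0,p_1) + B(x,p_0,p_1).\]
Choose $N_1(x,p_0,p_1)$ with $\p_1 N_1 = A$; then $D_2 N_1$ carries exactly the $p_2 A$ term, leaving a residue $\tilde B(x,p_0,p_1) := B - \p_x N_1 - p_1 \p_0 N_1$. Taking $f_1 := -\p_1^2 \tilde B$ reconstructs the $\int^{p_1}\!\!\!\int f_1$ piece correctly (since $\int^{p_1}\!\!\!\int \p_1^2 \tilde B = \tilde B - \tilde B|_{p_1=0} - p_1\, \p_1 \tilde B|_{p_1=0}$), and the remaining function of $(x,p_0)$ is split between $\int^{p_0} f_0$ and the free $p_1$-independent part of $N_1$. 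This produces the third identity of \eqref{1.5}.

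For the sufficient direction (and also for deriving the $f_3$-formula in the necessary direction), I compute $E^2_4 L_2$ summand-by-summand using four building-block identities. First, $E^2_4(D_2 N_1) = 0$ for $N_1 = N_1(x,p_0,p_1)$ (the null-Lagrangian identity, verified directly from the commutator). Second, $E^2_4(\int^{p_0} f_0) = f_0$, trivially. Third, $E^2_4(-\int^{p_1}\!\!\!\int f_1) = -E^1_1\int^{p_1}\!\!\!\int f_1 + p_2 f_1$, using $D_2 = D_1 + p_2 \p_1$ and the fact that the derivatives of $\int^{p_1}\!\!\!\int f_1$ involved depend only on $(x,p_0,p_1)$. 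Fourth, and main:
\[E^2_4 \int^{p_2}\!\!\!\int e^{-R_2} - E^2_2 \int^{p_2}\!\!\!\int e^{-R_2} = p_4\, e^{-R_2} + 2 p_3\, D_2 e^{-R_2} + p_3^2\, \p_2 e^{-R_2},\]
obtained by decomposing $D_4 = D_2 + p_3 \p_2 + p_4 \p_3$ and tracking its contribution inside $D_4^2 \p_2$.

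Summing these four identities and substituting $D_2 e^{-R_2} = -e^{-R_2} D_2 R_2$ together with $\p_2 e^{-R_2} = -e^{-R_2} \p_2 R_2$ produces $E^2_4 L_2 = \rho_2(p_4 - f_3)$ with $\rho_2$ and $f_3$ exactly as in \eqref{1.5}. The sufficient direction follows at once; the necessary direction follows by combining this computation with the parameterization of $L_2$ in the second paragraph, since any solution must assume that form. The main technical hurdle is the fourth identity --- an elementary but careful manipulation of the truncated operators $D_m$ --- and the rest of the argument is routine bookkeeping.
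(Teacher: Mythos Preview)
Your proposal is correct and rests on the same operator identities as the paper's proof (the reduction $E^2_4 \to E^2_2$ of Lemma~\ref{2.2}(c), which is precisely your ``fourth identity'', and the null-Lagrangian fact $E^2_4(D_2 N_1)=0$, which follows from Lemma~\ref{2.5}). The organization, however, is genuinely more streamlined than the paper's. The paper proceeds through a chain of four equivalent statements (a)$\Leftrightarrow$(b)$\Leftrightarrow$(c)$\Leftrightarrow$(d), introducing intermediate unknowns $L_1,\widetilde L_1,L_0,\widetilde L_0$, invoking Lemma~\ref{2.6} to read off the quadratic $p_3$-structure of $f_3$ from the linear ODE $D_3\rho_2+\tfrac12(\p_3 f_3)\rho_2=0$, and using the telescoping trick $E^1_2 D_2=-D_2^2\p_1$ to pass from (b) to (c). You instead parameterize $L_2$ in its final form at the outset and compute $E^2_4$ on each summand directly; this absorbs the paper's (b)$\Leftrightarrow$(c)$\Leftrightarrow$(d) into your second and third building-block identities, and makes Lemma~\ref{2.6} unnecessary (you simply set $R_2=-\log\rho_2$). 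What the paper's longer route buys is a template that scales verbatim to the general $n\geq 3$ case (Theorem~\ref{thmB}), where the intermediate statements $P(\ell)$ and Lemma~\ref{2.6} become the natural induction framework; your direct computation is cleaner here but would need to be reorganized somewhat to handle the general case.
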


For the general case, we have

%
%
\begin{thm}\label{thmB}
Consider \eqref{1.4} for $n\geq 3$.  Suppose that \eqref{1.4} is solved for some $L_n$ and $\rho_n$.  There are functions $R_n$, $(f_k)_{0\leq{k}\leq{n-1}}$ and $N_{n-1}$ such that
\begin{equation}\label{1.6}
\left\{
\begin{array}{rcl}
f_{2n-1} & = & \displaystyle nD_{n+1}R_n\cdot p_{2n-1} - e^{R_n}\left[(-1)^nE^n_{2n-2} \int^{p_n} \!\!\! \int \! e^{-R_n} \right.\\
& & \displaystyle \left. + \sum_{\ell=1}^{n-1} \left(f_{\ell} p_{2\ell} + (-1)^\ell E^{\ell}_{2\ell-1} \int^{p_\ell} \!\!\! \int \! f_\ell \right) + f_0 \right]\ , \\
\rho_n & = & e^{-R_n}\ , \\
L_n & = & \displaystyle (-1)^n \int^{p_n} \!\!\! \int \! e^{-R_n} + \sum_{\ell=1}^{n-1} (-1)^{\ell} \int^{p_\ell} \!\!\! \int \! f_{\ell} + \int^{p_0} \! f_0 + D_nN_{n-1}\ .
\end{array}
\right.
\end{equation}
Conversely, given any functions $R_n$, $(f_k)_{0\leq{k}\leq{n-1}}$ and $N_{n-1}$, the functions $f_{2n-1}, L_n$ and $\rho_n$ defined in \eqref{1.6} solves \eqref{1.4}.
\end{thm}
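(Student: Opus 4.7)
The plan is to treat both directions of the equivalence by a shared decomposition of the ansatz Lagrangian. Write
\[
L_n = L_n^{(n)} + \sum_{\ell=1}^{n-1} L_n^{(\ell)} + L_n^{(0)} + D_n N_{n-1},
\]
with $L_n^{(n)} = (-1)^n \int^{p_n}\!\!\!\int e^{-R_n}$, $L_n^{(\ell)} = (-1)^\ell \int^{p_\ell}\!\!\!\int f_\ell$ for $1 \leq \ell \leq n-1$, and $L_n^{(0)} = \int^{p_0} f_0$. Since $D_n N_{n-1}$ is a total derivative, it is annihilated by $E^n_{2n}$. For each $\ell < n$, $L_n^{(\ell)}$ depends only on $(x, p_0, \ldots, p_\ell)$; a short induction using the truncation $D_{2n}\phi = D_{m+1}\phi$ whenever $\phi$ depends up to $p_m$ shows $E^n_{2n} L_n^{(\ell)} = E^\ell_{2\ell} L_n^{(\ell)}$, and isolating the $p_{2\ell}$-coefficient further gives $E^\ell_{2\ell} L_n^{(\ell)} = f_\ell\, p_{2\ell} + (-1)^\ell E^\ell_{2\ell-1}\int^{p_\ell}\!\!\!\int f_\ell$; similarly $E^n_{2n} L_n^{(0)} = f_0$.

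The central computation is the expansion of $E^n_{2n} L_n^{(n)}$. With $A := \partial_n^2 L_n^{(n)} = (-1)^n e^{-R_n}$, I would write $D^j_{2n}\partial_n L_n^{(n)} = p_{n+j} A + R_j$ for a remainder $R_j$ depending up to $p_{n+j-1}$; a direct expansion yields the recursion $R_{j+1} = p_{n+j} D_{n+1} A + D_{n+j} R_j$. Taking $\partial_{n+j}$ of both sides --- with care at $j = 1$, where $D_{n+1}A$ itself depends on $p_{n+1}$ --- and using $\partial_n R_1 = D_n A + \partial_{n-1}\partial_n L_n^{(n)}$ (from the commutator $\partial_n D_n = D_n\partial_n + \partial_{n-1}$) gives $\partial_{2n-1} R_n = n D_{n+1} A + \partial_{n-1}\partial_n L_n^{(n)}$. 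Combined with the $(-1)^{n-1}\partial_n\partial_{n-1} L_n^{(n)}$ contribution from the $k = n-1$ term of $E^n_{2n}$, the mixed partials cancel and the $p_{2n-1}$-coefficient of $E^n_{2n} L_n^{(n)}$ reduces to $(-1)^n n D_{n+1} A = -n\rho_n D_{n+1} R_n$. The residual terms assemble into the identity
\[
E^n_{2n} L_n^{(n)} = \rho_n p_{2n} - n\rho_n D_{n+1}R_n\, p_{2n-1} + (-1)^n E^n_{2n-2}\int^{p_n}\!\!\!\int e^{-R_n}.
\]

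For sufficiency, combining this with the lower-order contributions and multiplying \eqref{1.4} by $-e^{R_n}$ reproduces the formula for $f_{2n-1}$ in \eqref{1.6}. For necessity, the $p_{2n}$-coefficient in \eqref{1.4} forces $\rho_n = (-1)^n\partial_n^2 L_n$, hence $\partial_n^2 L_n = (-1)^n e^{-R_n}$ with $R_n := -\ln\rho_n$. Integrating twice in $p_n$ and absorbing the $p_n$-linear term into $D_n N_{n-1}^\circ$ leaves a function of $(x, p_0, \ldots, p_{n-1})$, which decomposes recursively by setting $f_\ell := (-1)^\ell\partial_\ell^2$(residue at level $\ell$) and absorbing successive linear remnants into total derivatives that merge into a single $D_n N_{n-1}$ (since $D_n\phi = D_m\phi$ whenever $\phi$ is independent of $p_m, \ldots, p_{n-1}$).

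The principal obstacle is the leading-piece computation: the delicate bookkeeping in the recursion for $R_{j+1}$, where the special behaviour at $j = 1$ and the commutator identity conspire to produce exactly the factor $n D_{n+1}R_n$, and the subsequent identification of the residual with $E^n_{2n-2}\int^{p_n}\!\!\!\int e^{-R_n}$. The restriction $n \geq 3$ is essential: at $n = 2$ the indices $n+1$ and $2n-1$ coincide, so $R_n$ fails to be linear in $p_{2n-1}$ and an additional quadratic term $\partial_2 R_2\, p_3^2$ arises, handled separately in Theorem~\ref{thmA}.
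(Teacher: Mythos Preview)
Your overall strategy is correct and is organized rather differently from the paper. The paper argues through a chain of four equivalent formulations, using a multinomial expansion of $D_m^k$ (Lemma~\ref{2.4}) to compute $(E^n_{2n}-E^n_{2n-2})L_n$ exactly, and then an inductive reduction lemma (Lemma~\ref{5.1}) to pass from $E^n_{2n-2}(L_{n-1}p_n+\widetilde L_{n-1})$ down to $E^2_2(L_1p_2+\widetilde L_1)$. You instead decompose $L_n$ from the outset into the pieces $L_n^{(\ell)}$, reduce each lower piece directly via $E^n_{2n}L_n^{(\ell)}=E^\ell_{2\ell}L_n^{(\ell)}$, and handle the top piece by your $R_j$-recursion. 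This is attractive: it bypasses the multinomial combinatorics entirely, and in the necessity direction you avoid Lemma~\ref{2.6} by simply setting $R_n:=-\ln\rho_n$ (legitimate since $\rho_n>0$).

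There is, however, a real gap at the sentence ``the residual terms assemble into the identity.'' Your recursion correctly extracts the coefficients of $p_{2n}$ and $p_{2n-1}$ in $E^n_{2n}L_n^{(n)}$, but you have not shown that $E^n_{2n}L_n^{(n)}$ is \emph{affine} in $(p_{2n},p_{2n-1})$, nor that its value at $p_{2n}=p_{2n-1}=0$ equals $E^n_{2n-2}L_n^{(n)}$. This is precisely the content the paper obtains from Lemma~\ref{2.4}. In your framework the cleanest fix is to observe that $D_mp_m=0$, so that the telescoping identity
\[
D_m^k-D_{m-1}^k=\sum_{i=0}^{k-1}D_m^{i}\,(p_m\partial_{m-1})\,D_{m-1}^{k-1-i}
= p_m\sum_{i=0}^{k-1}D_m^{i}\,\partial_{m-1}\,D_{m-1}^{k-1-i}
\]
gives $(E^n_{2n}-E^n_{2n-1})L_n^{(n)}=p_{2n}\cdot(\cdots)$ and $(E^n_{2n-1}-E^n_{2n-2})L_n^{(n)}=p_{2n-1}\cdot(\cdots)$; evaluating at $p_{2n}=p_{2n-1}=0$ then yields $E^n_{2n-2}L_n^{(n)}$ on the nose, and your recursion already contains the information that the two factors $(\cdots)$ are themselves independent of $p_{2n-1},p_{2n}$ when $n\ge3$. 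Without this (or an equivalent argument) the key identity is asserted, not proved. A smaller omission: in the necessity direction you should say explicitly that once $L_n$ has been recast in the ansatz form, the sufficiency computation forces the given $f_{2n-1}$ to coincide with the formula in \eqref{1.6}; you construct $R_n,f_\ell,N_{n-1}$ from $L_n$ but never close the loop back to $f_{2n-1}$.
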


Theorem \ref{thmA} and Theorem \ref{thmB} are very similar except that $f_3$ is quadratic in $p_3$ while $f_{2n-1}, n\geq 3,$ is linear in $p_{2n-1}$.

This paper is organized as follows.  In Section 2 we collect some elementary while useful properties of the differential operators $D_m$ and $E_m^n$. In Section 3 we give a detailed proof of Theorem \ref{thmA}.  Since the form of the solution is quite different from those obtained in \cite{F}, we verify that the solution given here satisfies Fels' conditions in Section 4.  We treat the general case in Section 5.  In Section 6 we establish the analogues of Theorems \ref{thmA} and \ref{thmB} when the Lagrangian could be of order higher
than $n$.  Based on the form of our solution, we present in Section 7 an algorithm to check whether a given equation \eqref{1.2} has a variational multiplier or not. In the last section we mention an application in the study of parabolic partial differential equations.

We refer the reader to Anderson--Thompson \cite{AT} for a clear account of the background and results on the multiplier problem for ordinary differential equations.  Related results can be found in \cite{DZ}, \cite{J2} and \cite{NA}.  In general, the multiplier problem belongs to the inverse problem of the calculus of variations and it makes sense for partial differential equations.  The reader may consult Olver\cite{O} and Saunders\cite{S} for further information.  The works Anderson--Duchamp\cite{AD1} and \cite{AD2} contain results on the
multiplier problem for partial differential equations.

\medskip

%
%
\section{Auxiliary Results}
\subsection{Properties of the Differential Operators}

We observe that the variational multiplier problem is formulated in terms of the operator $E_{2n}^n$.

\begin{lem}\label{2.1}
Given a smooth function $L=L_n(x,p_0,\dots,p_n)$, we have
$$EL[u]=(E_{2n}^nL_n)[u].$$
\end{lem}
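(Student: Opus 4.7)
The plan is to unwind the definition of $E_{2n}^n$, substitute $p_j = u^{(j)}(x)$, and recognize the truncated total derivative $D_{2n}$ as the genuine total derivative with respect to $x$ along the prolongation of $u$. Concretely, since
\[
E_{2n}^n L_n = \sum_{k=0}^n (-1)^k D_{2n}^k \partial_k L_n = \sum_{k=0}^n (-1)^k D_{2n}^k L_{p_k},
\]
matching this termwise against the definition
\[
EL[u](x) = \sum_{k=0}^n (-1)^k \frac{d^k}{dx^k}\!\left[L_{p_k}(x,u,\dots,u^{(n)})\right]
\]
reduces the lemma to showing that
\[
\bigl(D_{2n}^k L_{p_k}\bigr)\bigr|_{p_j = u^{(j)}(x)} \;=\; \frac{d^k}{dx^k}\!\left[L_{p_k}(x,u,\dots,u^{(n)})\right]
\qquad (k=0,1,\dots,n).
\]

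First I would prove the single-step version: if $F(x,p_0,\dots,p_m)$ depends only on variables up to $p_m$ with $m \leq 2n-1$, then the chain rule gives
\[
\frac{d}{dx}\bigl[F(x,u,u',\dots,u^{(m)})\bigr]
= F_x + \sum_{j=0}^{m} u^{(j+1)} F_{p_j}
= (D_{2n}F)\bigr|_{p_j=u^{(j)}},
\]
where the last equality uses $F_{p_j}=0$ for $j>m$ to drop the spurious terms in $D_{2n}=\partial_x+\sum_{i=1}^{2n}p_i\partial_{i-1}$. Thus $D_{2n}$ commutes with evaluation at $u$ as long as the function to which it is applied has $p$-order at most $2n-1$.

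Next I would iterate this by induction on $k$. Starting from $L_{p_k}$, which depends on $p_0,\dots,p_n$, one checks that each application of $D_{2n}$ raises the $p$-order by at most $1$, so after $k$ applications the result depends on $p_0,\dots,p_{n+k}$. Since $k\le n$, the order at each intermediate stage is at most $n+(k-1)\le 2n-1$, which keeps us within the regime where the single-step lemma applies. The inductive step is then
\[
D_{2n}^{k}L_{p_k}\bigr|_{p=u}
= D_{2n}\!\left(D_{2n}^{k-1}L_{p_k}\right)\!\bigr|_{p=u}
= \frac{d}{dx}\!\left[D_{2n}^{k-1}L_{p_k}\bigr|_{p=u}\right]
= \frac{d^{k}}{dx^{k}}\!\left[L_{p_k}(x,u,\dots,u^{(n)})\right].
\]
Summing with signs $(-1)^k$ over $k=0,\dots,n$ gives the lemma.

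The only subtle point is the bookkeeping on the truncation index: one must verify that the truncation at $2n$ is high enough that $D_{2n}$ never loses information when acting on the iterates of $L_{p_k}$ for $k\le n$. This is precisely why the Euler--Lagrange equation of an order-$n$ Lagrangian is recovered by $E_{2n}^n$ and not by $E_m^n$ for any smaller $m$; once this bound is in place, the proof is a transparent application of the chain rule.
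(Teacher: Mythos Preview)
Your proof is correct and follows essentially the same approach as the paper's. The paper's argument is terser: it simply notes that since $\frac{d^k}{dx^k}$ applied to an order-$n$ function produces derivatives up to order $n+k\le 2n$, each $d/dx$ may be replaced by $D_{2n}$; your write-up supplies the explicit chain-rule step and the induction on $k$ that underlie this remark.
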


\begin{proof}
By linearity, it suffices to show that for $k=0,1,\dots,n$,
\begin{equation*}
\dd[k]{x}L_n[u]=(D_{2n}^kL_n)[u].
\end{equation*}
Since the left hand side depends on derivatives up to order $n+k\leq2n$, each $d/dx$ can be replaced by $D_{2n}$.
\end{proof}

In the following we list some elementary properties and auxiliary results for the fourth order problem, i.e. when $n=2$.

\begin{lem}\label{2.2}
The followings hold true.
\begin{enumerate}[(a)]
\item (Extracting the main order) For $m\geq1$,
$$D_m=D_{m-1}+p_m\p_{m-1}.$$
\item (Commutator relation) For $m\geq{n}\geq1$,
$$\p_{n}D_m=D_{m}\p_n+\p_{n-1}.$$
\item (Reducing the Euler--Lagrange operators)
\begin{equation*}\begin{split}
E_4^2&=E_3^2+(-p_4\p_1+2p_4D_3+p_4^2\p_3)\p_3+p_4\p_2^2\\
E_3^2&=E_2^2+2p_3D_2\p_2^2+p_3^2\p_2^3.
\end{split}\end{equation*}
\item (A telescoping sum)
$$E_2^1D_2=-D_2^2\p_1.$$
\end{enumerate}
\end{lem}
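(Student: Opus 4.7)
\textbf{Proof plan for Lemma~\ref{2.2}.}
Parts (a) and (b) are essentially unfolding the definitions. For (a), I would simply subtract the explicit sum defining $D_{m-1}$ from that of $D_m$; only the single summand $p_m\partial_{m-1}$ remains. For (b), I would apply $\partial_n$ to $D_m=\partial_x+\sum_{k=1}^m p_k\partial_{k-1}$ term by term using the product rule: the coefficient $p_k$ depends on $p_n$ only when $k=n$ (which does occur because $m\geq n$), producing the single contribution $(\partial_n p_n)\partial_{n-1}=\partial_{n-1}$, while the remaining terms reassemble into $D_m\partial_n$.

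For (c), the plan is to expand $E_4^2=\partial_0-D_4\partial_1+D_4^2\partial_2$ using the peel-off formula $D_4=D_3+p_4\partial_3$ from (a), and then invoke (b) to push each $\partial_3$ past each $D_3$. Multiplying out $(D_3+p_4\partial_3)^2$, the cross terms $D_3(p_4\partial_3)+(p_4\partial_3)D_3$ simplify because $D_3 p_4=0$ and by (b) $\partial_3 D_3=D_3\partial_3+\partial_2$, contributing $2p_4 D_3\partial_3+p_4\partial_2$, while the square yields $p_4^2\partial_3^2$. Composing with $\partial_2$ on the right, combining with the linear piece $-D_4\partial_1=-D_3\partial_1-p_4\partial_3\partial_1$, and grouping terms according to whether they carry a trailing factor of $\partial_3$ produces the stated identity for $E_4^2$. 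The companion formula for $E_3^2$ follows by running the same mechanical argument one order lower, with $D_3=D_2+p_3\partial_2$ replacing $D_4=D_3+p_4\partial_3$.

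For (d), I would compute directly $E_2^1 D_2=(\partial_0-D_2\partial_1)D_2=\partial_0 D_2-D_2\partial_1 D_2$. Since no coefficient of $D_2$ depends on $p_0$, $\partial_0 D_2=D_2\partial_0$; by (b) with $n=1$, $m=2$, $\partial_1 D_2=D_2\partial_1+\partial_0$. Substituting, the two $D_2\partial_0$ terms cancel telescopically, leaving $-D_2^2\partial_1$.

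The principal obstacle is purely bookkeeping: in the expansion of $D_4^2\partial_2$ in (c) it is easy to misplace factors of $\partial_2$ when rearranging the cross terms, and the commutator identity from (b) must be applied systematically to keep every operator in the correct order.
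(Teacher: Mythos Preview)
Your plan is correct and is exactly the ``direct computation'' the paper invokes: (a) and (b) unfold the definitions, and (c), (d) follow by expanding with (a) and commuting with (b). One caveat on the first identity in (c): carrying your expansion through literally gives
\[
E_4^2=E_3^2+\bigl(-p_4\partial_1+2p_4D_3\partial_2+p_4^2\partial_2\partial_3\bigr)\partial_3+p_4\partial_2^2,
\]
so the printed bracket is missing a factor $\partial_2$ in two of its terms---a harmless slip, since in the paper this formula is only applied to $L_2$, for which the entire $\partial_3$-tail vanishes, and the second identity in (c) and part (d) come out exactly as stated.
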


\begin{proof}
Parts (a) and (b) follow from  Definition \ref{Def1.1}. (c) and (d) can be obtained by direct computations.
\end{proof}

\subsection{Higher Order Analogues}

In this subsection we deal with those results needed for the general case $n\geq3$.

When we replace $D_m$ by its power in Lemma \ref{2.2}(b), we obtain a binomial-type formula.
\begin{lem}[Commutating with powers]\label{2.3}
For $k,m,n\geq0$ and $m\geq{n}$, we have
$$\p_nD_m^k=\sum_{j=0}^{\min\{n,k\}}\binom{k}{j}D_m^{k-j}\p_{n-j}.$$
\end{lem}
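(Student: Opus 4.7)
The plan is to prove the identity by induction on $k$, using Lemma \ref{2.2}(b) as the single-step commutator and Pascal's identity $\binom{k}{j}+\binom{k}{j-1}=\binom{k+1}{j}$ to collapse the telescoping sums that appear.

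For the base case $k=0$, both sides reduce to $\partial_n$. The case $k=1$ is exactly Lemma \ref{2.2}(b) when $n\geq 1$, and the formula $\partial_0 D_m = D_m \partial_0$ (which holds because $D_m$ contains no $p_0$-derivatives and the coefficients $p_1,\dots,p_m$ are independent of $p_0$) handles $n=0$, consistent with the convention that the $\partial_{n-1}$ term drops out.

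For the induction step, assume the formula at level $k$ and factor
\[
\partial_n D_m^{k+1} = (\partial_n D_m)\, D_m^k = D_m\,\partial_n D_m^k + \partial_{n-1}D_m^k,
\]
using Lemma \ref{2.2}(b) in the last step (when $n=0$ the second term is absent and the remaining computation is trivial). Substituting the inductive hypothesis into each term gives
\[
\partial_n D_m^{k+1} = \sum_{j=0}^{\min\{n,k\}}\binom{k}{j}D_m^{k+1-j}\partial_{n-j} + \sum_{j=0}^{\min\{n-1,k\}}\binom{k}{j}D_m^{k-j}\partial_{n-1-j}.
\]
Reindexing the second sum by $i=j+1$ aligns its summands with those of the first (now indexed by $i=j$), after which Pascal's rule combines the two coefficients into $\binom{k+1}{i}$, producing the claimed formula at level $k+1$.

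The only real work is bookkeeping the upper index of the sum, since $\min\{n,k+1\}$ depends on whether $n\leq k$, $n=k+1$, or $n>k+1$. In each case one checks that the $i=0$ term picks up $\binom{k}{0}=\binom{k+1}{0}$, the $i=\min\{n,k+1\}$ term picks up either $\binom{k}{k}=\binom{k+1}{k+1}$ or an interior coefficient from Pascal, and the intermediate indices get $\binom{k}{i}+\binom{k}{i-1}=\binom{k+1}{i}$. I expect this case analysis of the summation limits to be the only subtle point; the algebra itself is essentially the standard derivation of the Leibniz-type rule for iterated commutators.
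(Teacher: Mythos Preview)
Your proof is correct and follows essentially the same approach as the paper: induction on $k$, the single-step commutator Lemma~\ref{2.2}(b), and Pascal's identity to recombine coefficients. The only cosmetic difference is that the paper peels off the \emph{rightmost} $D_m$ (writing $\partial_n D_m^{k+1}=\big(\partial_n D_m^k\big)D_m$ and then commuting each $\partial_{n-j}$ past the final $D_m$), whereas you peel off the leftmost one; both lead to the same case split on the summation limit $\min\{n,k+1\}$.
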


The proof, which resembles the standard inductive proof of the binomial theorem, is postponed to the appendix.

The generalization is very similar if one has higher partial derivatives on the left. For our purpose, one with a second derivative is enough.

\begin{lem}\label{2.3b}
For $m\geq{n}\geq1$,
$$\p_n^2D_m=(D_m\p_n+2\p_{n-1})\p_n.$$
\end{lem}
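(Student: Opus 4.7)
The plan is to derive the identity by applying the basic commutator relation $\partial_n D_m = D_m \partial_n + \partial_{n-1}$ (valid for $m \geq n \geq 1$, noted right after Definition \ref{Def1.1} and recorded again as Lemma \ref{2.2}(b)) twice in succession, together with the obvious fact that pure partials in the $p$-variables commute with each other.

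More precisely, I would first write $\partial_n^2 D_m = \partial_n(\partial_n D_m)$ and substitute the commutator to obtain $\partial_n(D_m \partial_n + \partial_{n-1}) = (\partial_n D_m)\partial_n + \partial_n \partial_{n-1}$. Applying the commutator once more to the first summand turns $(\partial_n D_m)\partial_n$ into $D_m \partial_n^2 + \partial_{n-1}\partial_n$. Since $\partial_n$ and $\partial_{n-1}$ are ordinary partial derivatives with respect to independent coordinates $p_n, p_{n-1}$, we have $\partial_n\partial_{n-1} = \partial_{n-1}\partial_n$, so the remaining term also contributes $\partial_{n-1}\partial_n$. Collecting the two copies gives $\partial_n^2 D_m = D_m\partial_n^2 + 2\partial_{n-1}\partial_n$, which factors as $(D_m\partial_n + 2\partial_{n-1})\partial_n$ as claimed.

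There is no real obstacle here: the statement is a one-step consequence of the first-order commutator, and the calculation is short enough that I would simply present it inline rather than invoke the more general Lemma \ref{2.3}. (One could alternatively take Lemma \ref{2.3} with $k=1$ and then apply $\partial_n$ from the left, but that is a detour past machinery we do not need.)
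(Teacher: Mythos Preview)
Your argument is correct and is precisely the straightforward computation the paper has in mind; the paper itself does not spell out the proof beyond ``The proof is straightforward,'' and your two applications of the commutator $\partial_n D_m = D_m\partial_n + \partial_{n-1}$ together with commutativity of the $\partial_j$'s are exactly the intended steps.
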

The proof is straightforward.

To state a formula for the pure power $D_m^k$, we employ a multi-index notation. Let $m\geq1$ be an integer. For an $m$-tuple of non-negative integers $i_{m-1},i_{m-2},\dots,i_0$, we write
$$I:=(i_{m-1},i_{m-2},\dots,i_0).$$
The descending labeling is due to the definition $$\p^{I}:=(\p_{m-1},\p_{m-2},\dots,\p_0)^{I}=\p_{m-1}^{i_{m-1}}\p_{m-2}^{i_{m-2}}\cdots\p_0^{i_0}.$$
We say that $I\geq0$ if $i_{m-j}\geq0$ for all $j=1,\dots,m$. The factorial of $I$ is defined to be
$$I!:=\prod_{j=1}^{m}i_{m-j}!=i_{m-1}!\cdots i_0!$$
and its absolute value is defined to be
$$\abs{I}:=\sum_{j=1}^{m}i_{m-j}=i_{m-1}+i_{m-2}+\cdots+i_0.$$
We also define its weighted additive norm by
$$\norm{I}:=\sum_{j=1}^{m}ji_{m-j}=i_{m-1}+2i_{m-2}+\cdots+mi_0$$
and its weighted multiplicative norm by
$$\norm{I}^*:=(1!,\dots,m!)^I=\prod_{j=1}^{m}(j!)^{i_{m-j}}=(1!)^{i_{m-1}}(2!)^{i_{m-2}}\cdots(m!)^{i_0}.$$
From now on, in any summation over a multi-index $I$, we implicitly require $I\geq0$.

\begin{lem}[Main terms of a power]\label{2.4}
For $m>k\geq1$,
$$D_m^k=\sum_{\norm{I}\leq{k}}a_I^{(k)}p_m^{\abs{I}}D_{m-1}^{k-\norm{I}}\p^I,$$
where
\begin{equation*}
a_I^{(k)}=
\begin{cases}
\dfrac{k!}{\norm{I}^{*}I!(k-\norm{I})!},&\text{if}\,I\geq0\\
0,&\text{otherwise}.
\end{cases}
\end{equation*}
\end{lem}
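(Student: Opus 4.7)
I would argue by induction on $k$ with $m$ fixed, using the decomposition $D_m = D_{m-1} + p_m\partial_{m-1}$ from Lemma 2.2(a). The base case $k=1$ is direct: among multi-indices with $\norm{I} \le 1$, only $I = 0$ (contributing $D_{m-1}$, with $a_0^{(1)} = 1$) and the unit vector in the $\partial_{m-1}$-slot (contributing $p_m\partial_{m-1}$, with coefficient $1$) appear, recovering $D_m$.

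For the inductive step I would write $D_m^{k+1} = (D_{m-1}+p_m\partial_{m-1})D_m^k$ and substitute the inductive hypothesis. Two observations are crucial: both $D_{m-1}$ and $\partial_{m-1}$ commute with multiplication by $p_m$, since neither involves a derivative with respect to $p_m$ nor any coefficient depending on it, so the factor $p_m^{\abs{I}}$ slides freely to the front in each piece. The $D_{m-1}$-piece then simply raises the exponent of $D_{m-1}$ by one. The $p_m\partial_{m-1}$-piece requires commuting $\partial_{m-1}$ past $D_{m-1}^{k-\norm{I}}$, which is exactly Lemma 2.3 applied with $n = m-1$; it produces $\sum_j \binom{k-\norm{I}}{j} D_{m-1}^{k-\norm{I}-j}\partial_{m-1-j}$. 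Letting $e_r$ denote the unit multi-index incrementing the $\partial_r$-slot, the reindexing $I' = I + e_{m-1-j}$ satisfies $\abs{I'} = \abs{I}+1$ and $\norm{I'} = \norm{I}+(j+1)$, which shifts the powers and norms into the correct final shape.

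It then remains to verify that the coefficient of $p_m^{\abs{J}}D_{m-1}^{k+1-\norm{J}}\partial^J$ agrees with $a_J^{(k+1)}$. Writing $s = \norm{J}$ and using $\norm{J - e_r}^* = \norm{J}^*/(m-r)!$ together with $(J-e_r)! = J!/J_r$, each summand $a_{J-e_r}^{(k)} \binom{k-s+m-r}{m-1-r}$ collapses to $k!\,(m-r)\,J_r\big/\bigl(\norm{J}^*J!(k-s+1)!\bigr)$. Summing over $r$ invokes $\sum_r (m-r)J_r = s$ to produce the single term $k!\,s/\bigl(\norm{J}^*J!(k-s+1)!\bigr)$; adding the first-piece contribution $a_J^{(k)}$ and applying the Pascal-type identity $(k-s+1)+s = k+1$ gives $(k+1)!/\bigl(\norm{J}^*J!(k+1-s)!\bigr) = a_J^{(k+1)}$. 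The boundary case $\norm{J} = k+1$ is handled by the second piece alone, since the first piece contributes only for $\norm{I} \le k$.

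I expect the main difficulty to be bookkeeping rather than conceptual: checking that the summation range $0 \le j \le \min\{m-1,k-\norm{I}\}$ in Lemma 2.3 accommodates exactly the shifts $e_r$ with $J_r \ge 1$, and that the hypothesis $m > k$ keeps the bound $j \le m-1$ from ever being active, so that the multi-index combinatorics unfolds without degenerate cases.
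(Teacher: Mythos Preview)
Your proposal is correct and follows essentially the same route as the paper's analytic proof: induction on $k$ with $m$ fixed, the decomposition $D_m = D_{m-1} + p_m\partial_{m-1}$, Lemma~\ref{2.3} to commute $\partial_{m-1}$ past the power of $D_{m-1}$, the shift $I \mapsto I + e_r$, and the coefficient identity $\sum_r (m-r)J_r = \norm{J}$ to collapse the sum into $a_J^{(k+1)}$. Your observation that the bound $j \le m-1$ in Lemma~\ref{2.3} is never active under the hypothesis $m>k$ is exactly what the paper uses (implicitly) when it extends the summation range on $j$ before reindexing.
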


A simple while non-trivial example is
$$D_4^2=D_3^2+2p_4D_3\p_3+p_4^2\p_3^2+p_4\p_2,$$
in the case $n=2$, where $I$ ranges over
$$(0,0,0,0),\,(1,0,0,0),\,(2,0,0,0),\text{ and }(0,1,0,0).$$
For general $n$, in analogy, the main terms are those where $\norm{I}$ is small. By the definition of $\norm{I}$, one may write down explicitly the terms involved and use it to solve the multiplier problem.

A detailed analytic proof as well as a combinatorial proof are given in the appendix.

The analogue of Lemma \ref{2.2}(d) is similar.
\begin{lem}[A telescoping sum]\label{2.5}
For $m\geq{n}\geq1$,
$$E_{m}^{n}D_m=(-1)^{n}D_m^{n+1}\p_n.$$
\end{lem}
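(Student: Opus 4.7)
The plan is to prove Lemma \ref{2.5} by a direct computation that mirrors the $n=2$ proof of Lemma \ref{2.2}(d): expand $E_m^n D_m$ via the definition, push every $\p_k$ past $D_m$ using the commutator formula of Lemma \ref{2.2}(b), then observe that the resulting double-indexed sum collapses as a telescoping series.

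Concretely, I would start from
\[
E_m^n D_m = \sum_{k=0}^{n} (-1)^k D_m^k\,\p_k D_m,
\]
and apply $\p_k D_m = D_m \p_k + \p_{k-1}$ (for $k\geq 1$), noting that for $k=0$ the commutator reduces to $\p_0 D_m = D_m \p_0$ since there is no $\p_{-1}$. Substituting and splitting the sum gives
\[
E_m^n D_m = D_m\p_0 + \sum_{k=1}^{n}(-1)^k D_m^{k+1}\p_k + \sum_{k=1}^{n}(-1)^k D_m^{k}\p_{k-1}.
\]
Reindexing the last sum by $j = k-1$ rewrites it as $-\sum_{j=0}^{n-1}(-1)^j D_m^{j+1}\p_j$, whose $j=0$ term $-D_m\p_0$ cancels the initial $D_m\p_0$. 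The remainder is
\[
\sum_{k=1}^{n}(-1)^k D_m^{k+1}\p_k \;-\; \sum_{j=1}^{n-1}(-1)^j D_m^{j+1}\p_j = (-1)^n D_m^{n+1}\p_n,
\]
exactly the claimed identity.

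I do not anticipate a real obstacle here: the only subtlety is keeping careful track of the boundary index at $k=0$ (where the commutator has no $\p_{-1}$ term) and at $k=n$ (the lone surviving summand). Since Lemma \ref{2.2}(b) is already available and the cancellation is purely formal, no combinatorial machinery such as Lemma \ref{2.4} is needed. The identity is essentially a one-dimensional telescoping, with the sign $(-1)^n$ arising naturally from the alternating definition of $E_m^n$.
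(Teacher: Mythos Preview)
Your proposal is correct and follows essentially the same telescoping argument as the paper's own proof: expand $E_m^n D_m$ by definition, apply the commutator $\p_k D_m = D_m\p_k + \p_{k-1}$ for $k\geq 1$ (with $\p_0 D_m = D_m\p_0$), and reindex so that all terms cancel except $(-1)^n D_m^{n+1}\p_n$. The only cosmetic difference is that the paper isolates the $k=n$ summand before reindexing, whereas you cancel it at the end.
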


\begin{proof}
We compute, by definition and Lemma \ref{2.2}(a),
\begin{align*}
E^n_m D_m =\, & \sum_{k=0}^n (-1)^k D^k_m \p_k D_m \\
=\, & \p_0 D_m + \sum_{k=1}^n (-1)^k D^k_m (D_m \p_k + \p_{k-1}) \\
=\, & D_m \p_0 + \sum_{k=1}^{n-1} (-1)^k D^{k+1}_m \p_k + (-1)^n D^{n+1}_m \p_n + \sum^n_{k=1} (-1)^k D^k_m \p_{k-1} \\
=\, & \sum^{n-1}_{k=0} (-1)^k D^{k+1}_m \p_k + (-1)^k D^{n+1}_m \p_n + \sum^{n-1}_{k=0} (-1)^{k+1} D^{k+1}_m \p_k \\
=\, & (-1)^n D^{n+1}_m \p_n.
\end{align*}
\end{proof}

\subsection{Solvability of the Linear Equation}

The following lemma is a key step to solving the multiplier problem once the main term is taken out. We recall that the subscript notation is given in Section \ref{notations}.

\begin{lem}[Integrating the linear equation]\label{2.6}
Suppose $n\geq-1$. The equation
$$D_{n+1}\rho_n+g_{n+1}\rho_{n}=0$$
has a solution with $\rho_n>0$ if and only if there exists a function $R_n$ such that
\begin{equation*}
\left\{
\begin{array}{rcl}
 \rho_n & = & e^{-R_n} \\
g_{n+1} & = & D_{n+1} R_n\ .
\end{array}
\right.
\end{equation*}
\end{lem}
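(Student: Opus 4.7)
The plan is to reduce the first-order equation to an identity about the logarithmic derivative of $\rho_n$, which is just the integrating-factor method transported from the ordinary derivative to the truncated total derivative $D_{n+1}$. The positivity hypothesis $\rho_n > 0$ is exactly what permits taking a logarithm, so the natural substitution is $R_n := -\log \rho_n$. This is smooth and has the same variable dependence $(x,p_0,\ldots,p_n)$ as $\rho_n$, which justifies the subscript.

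For the forward direction, I would use the fact that $D_{n+1} = \partial_x + \sum_{k=1}^{n+1} p_k \partial_{k-1}$ is a derivation, so the chain rule gives
$$D_{n+1}\rho_n = D_{n+1}(e^{-R_n}) = -(D_{n+1}R_n)\,e^{-R_n} = -(D_{n+1}R_n)\,\rho_n.$$
Substituting into $D_{n+1}\rho_n + g_{n+1}\rho_n = 0$ and dividing by $\rho_n > 0$ yields $g_{n+1} = D_{n+1}R_n$ as required. The subscript on $g_{n+1}$ is automatically consistent, since
$$D_{n+1}R_n = D_n R_n + p_{n+1}\,\partial_n R_n$$
is affine in $p_{n+1}$.

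For the converse, given any smooth $R_n$ and defining $\rho_n := e^{-R_n}$ and $g_{n+1} := D_{n+1}R_n$, the same chain-rule calculation immediately verifies the equation, and $\rho_n$ is positive by construction. There is no substantive obstacle in the argument; the only ingredient needed beyond algebra is the derivation property of $D_{n+1}$, which is immediate from Definition \ref{Def1.1}. The case $n=-1$ (where $\rho_{-1}$ and $g_0$ depend only on $x$ and $D_0 = \partial_x$) is subsumed by the same computation and recovers the classical scalar ODE integrating factor.
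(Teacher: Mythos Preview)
Your proof is correct, and in fact more direct than the paper's. You simply set $R_n := -\log\rho_n$ and invoke the chain rule for the derivation $D_{n+1}$; since $\rho_n$ already depends only on $(x,p_0,\ldots,p_n)$, so does $R_n$, and the identity $g_{n+1}=D_{n+1}R_n$ drops out immediately.

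The paper instead proves the ``only if'' direction by induction on $n$: it observes that $g_{n+2}$ must be affine in $p_{n+2}$, writes $g_{n+2}=\alpha_{n+1}p_{n+2}+\beta_{n+1}$, peels off a factor $\exp\bigl(-\int^{p_{n+1}}\alpha_{n+1}\bigr)$ from $\rho_{n+1}$, and then applies the inductive hypothesis to the reduced equation. This yields the explicit recursive construction
\[
R_{n+1}=R_n+\int^{p_{n+1}}\!\alpha_{n+1},
\]
building $R_n$ layer by layer \emph{from the data $g_{n+1}$ alone}. That recursion is precisely what drives Steps~(2) and~(3) of the algorithm in Section~7, where one starts from $g_{n+1}=\tfrac{1}{n}\partial_{2n-1}f_{2n-1}$ and must reconstruct $R_n$ without knowing $\rho_n$ in advance. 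Your argument, by contrast, presupposes $\rho_n$ and reads off $R_n$ from it --- perfectly valid for the lemma as stated, and for its use in the proofs of Theorems~\ref{thmA} and~\ref{thmB}, but not directly furnishing the algorithmic recipe.
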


\begin{proof}
The ``if'' part follows from the exponential chain rule,
$$D_{n+1}e^{-R_n}=e^{-R_n}D_{n+1}(-R_n),$$
which is valid because of the linearity of $D_{n+1}$ and the chain rule for partial derivatives.

We prove the ``only if'' part by induction on $n$. For $n=-1$, suppose $D_0 \rho_{-1} + g_0 \rho_{-1} = 0$ has a solution $\rho_{-1}(x)>0$, then $g_0=-\p_x\rho_{-1}/\rho_{-1}$ depends only on $x$. In this case, $\displaystyle\rho_{-1}= C \exp\left(-\int^x\!g_0\right)$ for some positive constant $C$. We can simply take $\displaystyle R_{-1}=-\int^x\!g_0-\log C$.

Assume the result holds for $n$ and let $D_{n+2} \rho_{n+1} + g_{n+2} \rho_{n+1} = 0$ have a solution $\rho_{n+1}>0$. Since the first term is linear in $p_{n+2}$, applying $\p_{n+2}$ twice to the equation yields
\begin{equation*}
\p^2_{n+2} g_{n+2} = 0.
\end{equation*}
Write $g_{n+2} = \alpha_{n+1} p_{n+2} + \beta_{n+1}$. Comparing the coefficients of $p_{n+2}$ in $$(D_{n+1}+p_{n+2}\p_{n+1})\rho_{n+1}+(\alpha_{n+1}p_{n+2}+\beta_{n+1})\rho_{n+1}=0,$$ we obtain
\begin{equation*}\begin{cases}
\p_{n+1} \rho_{n+1} + \alpha_{n+1} \rho_{n+1} = 0 \\
  D_{n+1} \rho_{n+1} + \beta_{n+1} \rho_{n+1} = 0.
\end{cases}\end{equation*}
Solving the first equation, $\displaystyle \rho_{n+1} = \exp \left( -\int^{p_{n+1}} \! \alpha_{n+1} \right) \rho_n $ for some positive function $\rho_n>0$. Substituting this into the second one, we have
\begin{align*}
\exp \left( -\int^{p_{n+1}} \! \alpha_{n+1} \right) D_{n+1} \rho_n - & \exp \left( - \int^{p_{n+1}} \! \alpha_{n+1} \right) D_{n+1} \int^{p_{n+1}} \! \alpha_{n+1} \cdot \rho_n \\
& + \beta_{n+1} \exp \left( - \int^{p_{n+1}} \! \alpha_{n+1} \right) \rho_n = 0,
\end{align*}
or
\begin{equation*}
D_{n+1} \rho_n + \left( \beta_{n+1} - D_{n+1} \int^{p_{n+1}} \! \alpha_{n+1} \right) \rho_n = 0.
\end{equation*}
By the induction hypothesis, there exists some $R_n$ such that $\rho_n = e^{-R_n} $ and
\begin{equation*}
D_{n+1} R_n = \beta_{n+1} - D_{n+1} \int^{p_{n+1}} \! \alpha_{n+1}.
\end{equation*}
Taking  $\displaystyle R_{n+1} = R_n + \int^{p_{n+1}} \! \alpha_{n+1}$, we have $\rho_{n+1} = e^{-R_{n+1}}$ and
\begin{equation*}
\begin{split}
D_{n+2} R_{n+1}
&= \alpha_{n+1} p_{n+2} + D_{n+1} \left( R_n + \int^{p_{n+1}} \! \alpha_{n+1} \right) \\
&= \alpha_{n+1} p_{n+2} + \beta_{n+1} \\
&= g_{n+2}.
\end{split}
\end{equation*}
By induction,  the result holds for all $n\geq-1$.
\end{proof}

%
%
\section{The Fourth Order Equation}

Ideas in the proof of Theorem \ref{thmA} can be outlined as follows. First, $\rho_2(x,p_0,p_1,p_2)$ can be expressed in terms of $L_2(x,p_0,p_1,p_2)$ by comparing the coefficients of $p_4$ on both sides of \eqref{1.4}($n=2$). Then, by expanding the right hand side of \eqref{1.4}, the highest order terms, that is, those terms involving $p_3$, in $f_3$ can be obtained. Eventually the form of $f_3$ can be revealed completely by further extracting the highest order terms in the Euler--Lagrange operators.

\begin{proof}[Proof of Theorem \ref{thmA}]
We will establish the equivalence of the following four statements.  Theorem \ref{thmA} follows by the equivalence of (a) and (d).
\begin{enumerate}[(a)]
\item There exist $\rho_2$, $f_3$ and $L_2$ such that \eqref{1.4}($n=2$) holds, that is,
    \begin{equation}\label{3.1}
    \rho_2(p_4-f_3)=E_4^2L_2.
    \end{equation}
\item There exist $R_2$, $L_1$ and $\widetilde{L}_1$ such that
    \begin{equation}\label{3.2}
    \left\{
    \begin{array}{rcl}
    \rho_2 & = & e^{-R_2}\ , \\
       f_3 & = & \p_2R_2\cdot p_3^2+2D_2R_2\cdot p_3 \\
           &   & \displaystyle -e^{R_2}\left[E_2^2\int^{p_2}\!\!\!\int\!e^{-R_2}+E_2^2\left(L_1p_2+\widetilde{L}_1\right)\right]\ , \\
       L_2 & = & \displaystyle \int^{p_2}\!\!\!\int\!e^{-R_2}+L_1p_2+\widetilde{L}_1 \ .
    \end{array}
    \right.
    \end{equation}
\item There exist $R_2$, $f_1$, $L_1$, $L_0$ and $\widetilde{L}_0$ such that
    \begin{equation}\label{3.3}
    \left\{
    \begin{array}{rcl}
    \rho_2 & = & e^{-R_2}\ , \\
       f_3 & = & \p_2R_2\cdot p_3^2+2D_2R_2\cdot p_3 \\
           &   & \displaystyle -e^{R_2}\left[E_2^2\int^{p_2}\!\!\!\int\!e^{-R_2}+f_1p_2-E_1^1\int^{p_1}\!\!\!\int\!f_1+E_1^1\left(L_0p_1+\widetilde{L}_0\right)\right]\ , \\
       L_2 & = & \displaystyle \int^{p_2}\!\!\!\int\!e^{-R_2}-\int^{p_1}\!\!\!\int\!f_1+D_2\int^{p_1}\!L_1+L_0p_1+\widetilde{L}_0 \ .
    \end{array}
    \right.
    \end{equation}
\item There exist $R_2$, $f_1$, $f_0$ and $N_1$ such that (1.5) holds, that is,
    \begin{equation*}
    \left\{
    \begin{array}{rcl}
    \rho_2 & = & e^{-R_2} \ ,\\
       f_3 & = & \p_2R_2\cdot p_3^2+2D_2R_2\cdot p_3 \\
           &   & \displaystyle -e^{R_2}\left[E_2^2\int^{p_2}\!\!\!\int\!e^{-R_2}+f_1p_2-E_1^1\int^{p_1}\!\!\!\int\!f_1+f_0\right] \ , \\
       L_2 & = & \displaystyle \int^{p_2}\!\!\!\int\!e^{-R_2}-\int^{p_1}\!\!\!\int\!f_1+\int^{p_0}\!f_0+D_2N_1 \ .
    \end{array}
    \right.
    \end{equation*}
\end{enumerate}
Before providing the details, we point out that the equivalence of (a) and (b) already yields a solution of the multiplier problem. However, \eqref{1.5} gives a more explicit form for $f_3$, using which we can check whether or not \eqref{1.4}($n=2$) has a solution by the algorithm we state in the last section.

(a) $\Rightarrow$ (b): Let $L_2$ and $\rho_2$ solve (2.1). To extract the terms involving $p_4$, we recall from Lemma \ref{2.2}(c) that
$$E_4^2=E_3^2+(-p_4\p_1+2p_4D_3+p_4^2\p_3)\p_3+p_4\p_2^2.$$
As $\p_3L_2=0$, we have $E_4^2L_2=E_3^2L_2+p_4\p_2^2L_2$ and so \eqref{3.1} is decoupled to the system
\begin{equation*}
\left\{
\begin{array}{rcl}
     \rho_2 & = & \p^2_2 L_2 \ ,\\
-\rho_2 f_3 & = & E_3^2 L_2\ .
\end{array}
\right.
\end{equation*}
To see the dependence of $f_3$ on $p_3$, we use the second equation in Lemma \ref{2.2}(c) to obtain
\begin{equation*}
-\rho_2f_3=E_2^2L_2+2p_3D_2\p_2^2L_2+p_3^2\p_2^3L_2.\\
\end{equation*}
Differentiating both sides with respect to $p_3$, we have
\begin{eqnarray*}
-\rho_2 \p_3 f_3 & = & 2D_2 \p^2_2 L_2 + 2p_3 \p^3_2 L_2\\
& = & 2D_3 \p^2_2 L_2\\
& = & 2D_3\rho_2
\end{eqnarray*}
by the above system, or
\begin{equation*}
D_3 \rho_2 + \left( \dfrac{1}{2} \p_3 f_3 \right) \rho_2 = 0.
\end{equation*}
Applying Lemma \ref{2.6}, we see that this equation is solvable if and only if there exists a function $R_2(x,p_0,p_1,p_2)$ such that $\rho_2 = e^{-R_2}$ and
\begin{equation}\label{thmApf:compare}
\dfrac{1}{2}\p_3f_3=D_3R_2=p_3\p_2R_2+D_2R_2.
\end{equation}
Hence,
\begin{equation*}
f_3 = \p_2 R_2\cdot p_3^2 + 2D_2 R_2\cdot p_3 - e^{R_2} E^2_2 L_2.
\end{equation*}
From $\p^2_2 L_2 = e^{-R_2}$, there exist functions $L_1(x,p_0,p_1)$ and $\widetilde{L}_1(x,p_0,p_1)$ such that
\begin{equation*}
L_2 = \int^{p_2} \!\!\! \int \! e^{-R_2} + L_1 p_2 + \widetilde{L}_1,
\end{equation*}
and so
\begin{equation*}
f_3 = \p_2 R_2 \cdot p^2_3 + 2D_2 R_2 \cdot p_3 - e^{R_2} \left[ E^2_2 \int^{p_2} \!\!\! \int \! e^{-R_2} + E^2_2 \left(L_1 p_2 + \widetilde{L}_1 \right) \right].
\end{equation*}

(b) $\Rightarrow$ (a): Putting \eqref{3.2} into \eqref{3.1} and using the above calculations, we have
\begin{equation*}
\begin{split}
 &E_4^2L_2\\
=&E_3^2L_2+p_4\p_2^2L_2\\
=&E_2^2L_2+2p_3D_2\p_2^2L_2+p_3^2\p_2^3L_2+p_4\p_2^2L_2\\
=&E_2^2\left(\int^{p_2}\!\!\!\int\!e^{-R_2}+L_1p_2+\widetilde{L}_1\right)+2p_3D_2\left(e^{-R_2}\right)+p_3^2\p_2\left(e^{-R_2}\right)+p_4e^{-R_2}\\
=&e^{-R_2}\left[e^{R_2}\left(E_2^2\int^{p_2}\!\!\!\int\!e^{-R_2}+E_2^2\left(L_1p_2+\widetilde{L}_1\right)\right)-2D_2R_2\cdot p_3-\p_2R_2\cdot p_3^2+p_4\right]\\
=&\rho_2(p_4-f_3).
\end{split}
\end{equation*}

(b) $\Rightarrow$ (c): Now, we extract $p_2$ from $E_2^2(L_1p_2+\widetilde{L}_1)$. Using Lemma \ref{2.2}(d), we consider, in a rather tricky way,
\begin{equation*}
\begin{split}
  E_2^2(L_1p_2+\widetilde{L}_1)
&=E_2^1(L_1p_2+\widetilde{L}_1)+D_2^2L_1\\
&=E_2^1(L_1p_2+\widetilde{L}_1)-E_2^1D_2\int^{p_1}\!L_1\\
&=E_2^1\left(\widetilde{L}_1-D_1\int^{p_1}\!L_1\right)\\
&=\big(\p_0-(D_1+p_2\p_1)\p_1\big)\left(\widetilde{L}_1-D_1\int^{p_1}\!L_1\right)\\
&=E_1^1\left(\widetilde{L}_1-D_1\int^{p_1}\!L_1\right)-p_2\p_1^2\left(\widetilde{L}_1-D_1\int^{p_1}\!L_1\right) \ .\\
\end{split}
\end{equation*}
Write $\displaystyle f_1=-\p_1^2\left(\widetilde{L}_1-D_1\int^{p_1}\!L_1\right)$, so that
\begin{equation}\label{thmApf:eq3}
\widetilde{L}_1 = -\int^{p_1} \!\!\! \int \! f_1 + D_1 \int^{p_1} \! L_1 + L_0 p_1 + \widetilde{L}_0
\end{equation}
for some functions $L_0(x,p_0)$ and $\widetilde{L}_0(x,p_0)$. It follows that
\begin{equation*}
\begin{split}
  E^2_2 \left( L_1 p_1 + \widetilde{L}_1 \right)
&=f_1 p_2 + E^1_1 \left(-\int^{p_1} \!\!\! \int \! f_1 + L_0 p_1 + \widetilde{L}_0 \right) \\
&=f_1 p_2 - E^1_1 \int^{p_1} \!\!\! \int \! f_1 + E^1_1 \left( L_0 p_1 + \widetilde{L}_0 \right) \ .\\
\end{split}
\end{equation*}
Putting these back into \eqref{3.2}, \eqref{3.3} follows.

(c) $\Rightarrow$ (b): When \eqref{3.3} holds, we can simply define $\widetilde{L}_1$ by \eqref{thmApf:eq3} to obtain \eqref{3.2}.

(c) $\Rightarrow$ (d): We will show that $E_1^1(L_0p_1+\widetilde{L}_0)$ is independent of $p_1$. Indeed,
\begin{eqnarray*}
E_1^1(L_0p_1+\widetilde{L}_0)& =& \p_0(L_0p_1+\widetilde{L}_0)-(\p_x+p_1\p_0)L_0\\
&=& \p_0\widetilde{L}_0-\p_xL_0.
\end{eqnarray*}
Setting $f_0=\p_0\widetilde{L}_0-\p_xL_0$, we obtain the formula for $f_3$ in \eqref{1.5}, and
$$\widetilde{L}_0=\int^{p_0}\!f_0+\p_x\int^{p_0}\!L_0+L_{-1}$$
for some function $L_{-1}(x)$, which implies
\begin{equation*}
\begin{split}
L_2
&=\int^{p_2}\!\!\!\int\!e^{-R_2}-\int^{p_1}\!\!\!\int\!f_1+D_2\int^{p_1}\!L_1+L_0p_1+\int^{p_0}\!f_0+\p_x\int^{p_0}\!L_0+L_{-1}\\
&=\int^{p_2}\!\!\!\int\!e^{-R_2}-\int^{p_1}\!\!\!\int\!f_1+\int^{p_0}\!f_0+D_2\left(\int^{p_1}\!L_1+\int^{p_0}\!L_0+\int^x\!L_{-1}\right).
\end{split}
\end{equation*}
This is precisely the expression for $L_2$ in (1.5) once we set $\displaystyle N_1=\int^{p_1}\!L_1+\int^{p_0}\!L_0+\int^x\!L_{-1}$.

(d) $\Rightarrow$ (c): We take $L_1=\p_1N_1$. Then there exists a function $N_0(x,p_0)$ such that
\begin{equation*}
N_1=\int^{p_1}\!L_1+N_0.
\end{equation*}
Take $L_0=\p_0N_0$ and $\displaystyle\widetilde{L}_0=\int^{p_0}\!f_0+\p_xN_0$. Then $f_0=\p_0\widetilde{L}_0-\p_xL_0$ and
\begin{equation*}
\begin{split}
  \int^{p_0}\!f_0+D_2N_1
&=\int^{p_0}\!f_0+D_2\int^{p_1}\!L_1+D_1N_0\\
&=\int^{p_0}\!f_0+D_2\int^{p_1}\!L_1+\p_0N_0\cdot p_1+\p_xN_0\\
&=D_2\int^{p_1}\!L_1+L_0p_1+\widetilde{L}_0\ ,
\end{split},
\end{equation*}
which implies \eqref{3.3}.

The proof of Theorem \ref{thmA} is completed.
\end{proof}

%
%

%
%
\section{Consistency with Fels' Conditions}

In \cite{F} it is shown that there are a non-degenerate Lagrangian $L_2(x,p_0,p_1,p_2)$ and a multiplier $\rho_3(x,p_0,\dots,p_3)$ such that $\rho_3(p_4-f_3)=E_4^2 L_2$ holds if and only if $f_3$ satisfies
\begin{align*}
T_5 :=\, & \dfrac{1}{6}\p_3^3 f_3=0, \\
I_1 :=\, & \p_1f_3+\dfrac{1}{2}\dd[2]{x}\p_3f_3-\dd{x}\p_2f_3-\dfrac{3}{4}\p_3f_3\cdot\dd{x}\p_3f_3+\dfrac{1}{2}\p_2f_3\cdot\p_3f_3+\dfrac{1}{8}(\p_3f_3)^3=0,
\end{align*}
where $d/dx = D_2 +f_3 \p_3$.  As pointed out before, whenever $\rho_3(p_4-f_3)=E_4^2 L_2$ holds for some $\rho_3$, $\rho_3$ is up to second order and we could replace it by $\rho_2$.  To show the consistency of our results with Fels' conditions, we need to verify that the solution given in \eqref{1.5} satisfies $T_5=I_1=0$.  As $f_3$ is quadratic in $p_3$, $T_5=0$ holds trivially.  It suffices to verify that $I_1$ vanishes.  By verifying this, we see that \eqref{1.5} essentially gives the general solution to the system of non-linear partial differential equations given by $T_5=I_1=0$.

%
%
\begin{prop}\label{4.1}
If
\begin{equation*}
f_3 = \p_2 R_2 \cdot p^2_3 + 2D_2 R_2 \cdot p_3 - e^{R_2} \left( E^2_2 \int^{p_2} \!\!\! \int \! e^{-R_2} + f_1 p_2 - E^1_1 \int^{p_1} \!\!\! \int \! f_1 + f_0 \right),
\end{equation*}
for some functions $R_2$, $f_1$ and $f_0$, then $I_1=0$.
\end{prop}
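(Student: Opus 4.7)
The approach is direct computational verification. Since $f_3$ is a polynomial of degree two in $p_3$, the first Fels condition $T_5 = \tfrac{1}{6}\partial_3^3 f_3 = 0$ is immediate. For $I_1 = 0$, I substitute the explicit form of $f_3$ into each term of $I_1$ and show the sum vanishes identically in $(x,p_0,p_1,p_2,p_3)$. The core structural identity driving the cancellation is
\[
\tfrac{1}{2}\partial_3 f_3 \;=\; \partial_2 R_2 \cdot p_3 + D_2 R_2 \;=\; D_3 R_2,
\]
using $D_3 = D_2 + p_3\partial_2$; this is precisely how the leading $p_3$-dependence of $f_3$ in \eqref{1.5} is engineered.

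Write $f_3 = A p_3^2 + 2B p_3 + C$ with $A = \partial_2 R_2$, $B = D_2 R_2$, and $C = -e^{R_2} G$, where $G := E_2^2 \int^{p_2}\!\!\!\int e^{-R_2} + f_1 p_2 - E_1^1\int^{p_1}\!\!\!\int f_1 + f_0$ does not involve $p_3$. Using the total derivative along the equation, which I read as $d/dx = D_3 + f_3 \partial_3$, together with $\partial_3 R_2 = 0$, one has $(d/dx) R_2 = D_3 R_2$ and hence $(d/dx)\partial_3 f_3 = 2(D_3^2 R_2 + f_3 \partial_2 R_2)$. The remaining ingredients $(d/dx)^2\partial_3 f_3$, $(d/dx)\partial_2 f_3$, and $\partial_1 f_3$ follow by iterating the same rule. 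After substitution, $I_1$ becomes a polynomial in $p_3$ of degree at most three, and the verification reduces to checking that its coefficients at $p_3^0, p_3^1, p_3^2, p_3^3$ each vanish.

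The coefficients of $p_3^3$, $p_3^2$, and $p_3^1$ involve only $R_2$ and its derivatives (the contributions of $C$ enter only through the $f_3 \partial_3$ piece of $d/dx$, whose $p_3^{\geq 1}$ expansion pulls out $A$ or $B$) and collapse by repeated use of the commutator relation $\partial_n D_m = D_m \partial_n + \partial_{n-1}$ from Lemma~\ref{2.2}(b) along with standard product-rule manipulations on $e^{\pm R_2}$. The coefficient of $p_3^0$ is the subtle one: the specific form of $G$ is essential here (a quick counterexample with $R_2 \equiv 0$ and $C = p_2^2$ gives $I_1 = -2 p_3 \neq 0$). At this order, I plan to expand $E_2^2$ and $E_1^1$ via Definition~\ref{Def1.1} and invoke the telescoping identity $E_2^1 D_2 = -D_2^2 \partial_1$ from Lemma~\ref{2.2}(d), so that the $R_2$-, $f_1$-, and $f_0$-pieces of $G$ exactly cancel the corresponding contributions to $\partial_1 f_3$, $(d/dx)\partial_2 f_3$, and $\tfrac{1}{2}(d/dx)^2 \partial_3 f_3$ evaluated at $p_3 = 0$. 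I expect this $p_3^0$ coefficient to be the main obstacle, and plan to manage the bookkeeping by tracking contributions separately according to which of the free functions $R_2$, $f_1$, $f_0$ they originate from.
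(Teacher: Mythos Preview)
Your strategy---write $f_3 = Ap_3^2 + 2Bp_3 + C$ and verify $I_1 = 0$ coefficient by coefficient in $p_3$---is exactly the paper's. One correction, though: your claim that the $p_3^1$ coefficient involves only $R_2$ is false. Terms like $\tfrac12\partial_2 f_3\cdot\partial_3 f_3$ and $(d/dx)\partial_2 f_3$ contribute $\partial_2 C$ and $\partial_2^2 C$ at order $p_3^1$, and the paper's expression for that coefficient explicitly contains $-\partial_2^2 C + 2\partial_2 R_2\,\partial_2 C + (\partial_2^2 R_2 - (\partial_2 R_2)^2)C$. So the $p_3^1$ step is not a pure $R_2$-identity, and you will need the specific structure of $C$ there as well.

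The paper handles both the $p_3^1$ and $p_3^0$ coefficients not by tracking $R_2,f_1,f_0$ separately, but by differentiating the defining relation $-e^{-R_2}C = E_2^2\int^{p_2}\!\!\int e^{-R_2} + f_1 p_2 - E_1^1\int^{p_1}\!\!\int f_1 + f_0$ with respect to $p_2$ (once and twice) and $p_1$, and then applying three operator identities collected as a preliminary lemma: $\partial_2^2 E_2^2 = (D_2^2\partial_2 + 3D_2\partial_1 + 3\partial_0)\partial_2^2$ for the $p_3^1$ coefficient, $\partial_1 E_1^1 = -D_1\partial_1^2$ to absorb the $f_1$-terms, and crucially $(D_2\partial_2 - \partial_1)E_2^2 = D_2^3\partial_2^2$ for the $p_3^0$ coefficient. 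This last identity---rather than Lemma~\ref{2.2}(d)---is what makes the $R_2$-contribution at $p_3^0$ collapse to $e^{R_2}D_2^3 e^{-R_2} = -D_2^3 R_2 + 3D_2R_2\cdot D_2^2 R_2 - (D_2R_2)^3$, cancelling the remaining terms exactly. Your plan can be made to work, but these three identities are the clean way to organize the bookkeeping.
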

%
%

We begin with a lemma.
\begin{lem}[Preliminary calculations]\label{4.2}
We have
\begin{enumerate}[(a)]
\item $\p_2^2E_2^2=(D_2^2\p_2+3D_2\p_1+3\p_0)\p_2^2$,
\item $\p_1E_1^1=-D_1\p_1^2$,
\item $(D_2\p_2-\p_1)E^2_2=D^3_2\p^2_2$.
\end{enumerate}
\end{lem}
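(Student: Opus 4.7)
All three identities are commutator calculations with $D_m$ and $\p_k$, so the plan in each case is to expand $E_m^n = \sum_{k=0}^n (-1)^k D_m^k \p_k$, push the outer partials past the $D_m^j$ using the brackets already in hand --- $\p_n D_m = D_m \p_n + \p_{n-1}$ (Lemma \ref{2.2}(b)), its iterate $\p_n D_m^k = \sum_j \binom{k}{j} D_m^{k-j} \p_{n-j}$ (Lemma \ref{2.3}), and $\p_n^2 D_m = (D_m \p_n + 2\p_{n-1})\p_n$ (Lemma \ref{2.3b}) --- and then collect terms. Part (b) is a one-liner: from $E_1^1 = \p_0 - D_1 \p_1$ one gets $\p_1 E_1^1 = \p_0 \p_1 - (D_1 \p_1 + \p_0)\p_1$, in which the $\p_0 \p_1$ pieces cancel to leave $-D_1 \p_1^2$.

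For (a) I would first record the two auxiliary commutators $\p_2^2 D_2 = D_2 \p_2^2 + 2 \p_1 \p_2$ (directly from Lemma \ref{2.3b}) and
\begin{equation*}
\p_2^2 D_2^2 = D_2^2 \p_2^2 + 4 D_2 \p_1 \p_2 + 2 \p_0 \p_2 + 2 \p_1^2,
\end{equation*}
the latter obtained by applying Lemma \ref{2.3} to pass one $\p_2$ through $D_2^2$ and then applying Lemma \ref{2.2}(b) to pass the second. Substituting these into $\p_2^2 E_2^2 = \p_0 \p_2^2 - \p_2^2 D_2 \p_1 + \p_2^2 D_2^2 \p_2$, the $\pm 2 \p_1^2 \p_2$ cross terms cancel and what remains is exactly $D_2^2 \p_2^3 + 3 D_2 \p_1 \p_2^2 + 3 \p_0 \p_2^2 = (D_2^2 \p_2 + 3 D_2 \p_1 + 3 \p_0)\p_2^2$.

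For (c) the same strategy applies: I would expand $(D_2 \p_2 - \p_1)(\p_0 - D_2 \p_1 + D_2^2 \p_2)$ and commute $\p_2$ and $\p_1$ past the $D_2$'s on the left using $\p_2 D_2^2 = D_2^2 \p_2 + 2 D_2 \p_1 + \p_0$ and $\p_1 D_2^2 = D_2^2 \p_1 + 2 D_2 \p_0$ from Lemma \ref{2.3}. All intermediate terms --- those of the form $D_2^2 \p_1 \p_2$, $D_2 \p_1^2$, $D_2 \p_0 \p_2$, and $\p_0 \p_1$ --- cancel in pairs, and only $D_2^3 \p_2^2$ survives. As a sanity check, combining the claim with Lemma \ref{2.5}, which asserts $E_2^2 D_2 = D_2^3 \p_2$, recasts (c) as the commutation identity $(D_2 \p_2 - \p_1) E_2^2 = E_2^2 D_2 \p_2$. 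No conceptual obstacle arises anywhere; the only thing requiring care is the bookkeeping of the eight terms produced by $\p_2^2 D_2^2$ in part (a), which becomes transparent once one sorts them by degree in $\p_1$.
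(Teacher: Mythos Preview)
Your proposal is correct and follows essentially the same approach as the paper: direct commutator computations using $\p_n D_m = D_m\p_n + \p_{n-1}$ and its iterates. The only organizational difference is that the paper computes $\p_2 E_2^2 = D_2^2\p_2^2 + D_2\p_1\p_2 + 2\p_0\p_2 - \p_1^2$ as an intermediate step in (a) and then reuses it for (c) by writing $D_2\p_2 E_2^2 = D_2(\p_2 E_2^2)$, whereas you compute $\p_2^2 D_2^k$ first in (a) and expand (c) from scratch; this is a matter of bookkeeping, not substance.
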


\begin{remark}
The bracket in (a) is apparently of binomial type, and that in (c) resembles an Euler--Lagrange type operator.
\end{remark}

\begin{proof}
\begin{enumerate}[(a)]
\item
We have, by Lemma \ref{2.2}(b),
\begin{equation*}
\begin{split}
\p_2 E^2_2
=\, & \p_2 ( \p_0 - D_2 \p_1 + D^2_2 \p_2 ) \\
=\, & \p_0 \p_2 - D_2 \p_1 \p_2 - \p^2_1 + D^2_2 \p^2_2 + 2D_2 \p_1 \p_2 + \p_0 \p_2 \\
=\, & D^2_2 \p^2_2 + D_2 \p_1 \p_2 + 2\p_0 \p_2 - \p^2_1 \\
\p^2_2 E^2_2,
=\, & D^2_2 \p^3_2 + 2D_2 \p_1 \p^2_2 + \p_0 \p^2_2 + D_2 \p_1 \p^2_2 + \p^2_1 \p_2 +2\p_0 \p^2_2 - \p^2_1 \p_2 \\
=\, & D^2_2 \p^3_2 + 3D_2 \p_1 \p^2_2 + 3\p_0 \p^2_2 \\
=\, & (D_2^2\p_2+3D_2\p_1+3\p_0)\p_2^2.
\end{split}
\end{equation*}
We simply have
\begin{equation*}\begin{split}
\p_1E_1^1
&=\p_1(\p_0-D_1\p_1)\\
&=\p_1\p_0-(D_1\p_1+\p_0)\p_1\\
&=-D_1\p_1^2.
\end{split}\end{equation*}
\item
On one hand we have  $D_2\p_2E^2_2=D_2(D^2_2\p^2_2+D_2\p_1\p_2+2\p_0\p_2-\p^2_1)$, by the calculations in the proof of part (b). On the other hand, by Lemma \ref{2.3},
\begin{align*}
\p_1 E^2_2 =\, & \p_1 ( \p_0 - D_2 \p_1 + D^2_2 \p_2 ) \\
           =\, & \p_1 \p_0 - D_2 \p^2_1 - \p_0 \p_1 + D^2_2 \p_1 \p_2 + 2D_2 \p_0 \p_2 \\
           =\, & D^2_2 \p_1 \p_2 + 2D_2 \p_0 \p_2 - D_2 \p^2_1,
\end{align*}

\end{enumerate}
\end{proof}
%
%

%
%
\begin{proof}[Proof of Proposition \ref{4.1}]
By direct computations, if we write $f_3 = Ap^2_3 + 2Bp_3 + C$, where $A=\p_2R_2, B=D_2R_2$ and $C$ satisfies
\begin{equation}\label{thmC:eq3}
-e^{-R_2}C = E^2_2 \int^{p_2} \!\!\! \int \! e^{-R_2} + f_1p_2 - E^1_1 \int^{p_1} \!\!\! \int \! f_1 + f_0,
\end{equation}
we have
\begin{align*}
    & \p_1 f_3 - \dd{x} \p_2 f_3 + \dd[2]{x} \left( \dfrac{\p_3 f_3}{2} \right) + \left( \dfrac{\p_3 f_3}{2} \right) \left( \p_2 f_3 - 3\dd{x} \left( \dfrac{\p_3 f_3}{2} \right) \right) + \left( \dfrac{\p_3 f_3}{2} \right)^3 \\
=\, & \left[\p_2 (D_2 A) + \p_1 A - \p^2_2 B\right] p_3^2 \\
    & +\left[-\p^2_2 C + 2A \p_2 C + ( \p_2 A - A^2 ) C\right. \\
    & \left.+D^2_2 A + BD_2 A-A(D_2B-B^2) - 3B \p_2 B +3 \p_1 B\right] p_3 \\
    & +\left[ B ( \p_2 C - AC ) - D_2 (\p_2 C - AC ) + (\p_1 C + (D_2 A - \p_2 B)C) \right. \\
    & \left. +D^2_2 B - 3B D_2 B + B^3\right]
\end{align*}
First of all, the coefficient of $p_3^2$ is
\begin{equation*}
\p_2D_2\p_2R_2+\p_1\p_2R_2-\p^2_2D_2R_2=\p_2(D_2\p_2+\p_1-\p_2D_2)R_2=0,
\end{equation*}
which vanishes since the last bracket does. Therefore, it suffices to show that both the coefficient of $p_3$ and the zero order term are identically zero, that is,
\begin{equation}\label{thmC:eq1}
\begin{split}
-\p^2_2 C + 2\p_2 R_2 \cdot \p_2 C + (\p^2_2 R_2 - (\p_2 R_2)^2) C + D^2_2 \p_2 R_2 + D_2 R_2 \cdot D_2 \p_2 R_2 & \\
 + \p_2 R_2(D^2_2R_2-(D_2 R_2)^2) - 3D_2 R_2 \cdot \p_2 (D_2 R_2) + 3\p_1 D_2 R_2 & = 0
\end{split}
\end{equation}
and
\begin{equation}\label{thmC:eq2}
\begin{split}
D_2 R_2 ( \p_2 C - \p_2 R_2 \cdot C) - D_2 ( \p_2 C - \p_2 R_2 \cdot C )+(\p_1C-\p_1R_2\cdot C) & \\
 + D^3_2 R_2 - 3D_2R_2 \cdot D^2_2 R_2 + (D_2R_2)^3 & = 0 \ .
\end{split}
\end{equation}

To verify \eqref{thmC:eq1}, we differentiate \eqref{thmC:eq3} with respect to $p_2$ once and twice to obtain
\begin{equation}\label{thmC:eq4}
-e^{-R_2} \p_2 C + e^{-R_2} \p_2 R_2 \cdot C = \p_2 E^2_2 \int^{p_2} \!\!\! \int \! e^{-R_2} + f_1,
\end{equation}
and
\begin{equation*}
-e^{-R_2} \p^2_2 C + 2e^{-R_2} \p_2 R_2 \cdot \p_2 C + e^{-R_2} \p^2_2 R_2 \cdot C - e^{-R_2} ( \p_2 R_2)^2 C = \p^2_2 E^2_2 \int^{p_2} \!\!\! \int \! e^{-R_2}
\end{equation*}
respectively. Factoring out $e^{-R_2}$ in the left hand side of the last equality, and applying Lemma \ref{4.2}(a) to the right hand side, we have
\begin{align*}
    & e^{-R_2} (-\p^2_2 C + 2\p_2 R_2 \cdot \p_2 C + ( \p^2_2 R_2 - ( \p_2 R_2 )^2 ) C) \\
=\, & (D^2_2 \p_2 + 3D_2 \p_1 + 3\p_0 ) e^{-R_2}  \\
=\, & D^2_2 (-e^{-R_2} \p_2 R_2) + 3D_2 (-e^{-R_2} \p_1 R_2 ) +3 ( -e^{-R_2} \p_0 R_2) \\
=\, & D_2 (-e^{-R_2} D_2 \p_2 R_2 + e^{-R_2} D_2 R_2 \cdot \p_2 R_2 ) \\
    & +3(-e^{-R_2}D_2\p_1R_2+e^{-R_2}D_2R_2\cdot \p_1R_2) - 3e^{-R_2}\p_0R_2 \\
=\, & -e^{-R_2} D^2_2 \p_2 R_2 + e^{-R_2} D_2 R_2 \cdot D_2 \p_2 R_2 \\
    & - e^{-R_2}(D_2R_2)^2 \p_2 R_2 + e^{-R_2} D^2_2 R_2 \cdot \p_2 R_2 + e^{-R_2} D_2R_2 \cdot D_2 \p_2 R_2 \\
    & -3e^{-R_2} D_2 \p_1 R_2 + 3e^{-R_2}D_2 R_2 \cdot \p_1 R_2 - 3e^{-R_2} \p_0 R_2 \\
=\, & -e^{-R_2}(D^2_2\p_2R_2-2D_2R_2\cdot D_2\p_2R_2-\p_2R_2(D_2^2R_2-(D_2R_2)^2) \\
    & +3\p_1D_2R_2-3D_2R_2\cdot\p_1R_2) \\
=\, & -e^{-R_2}(D^2_2\p_2R_2+D_2R_2\cdot D_2\p_2R_2-\p_2R_2(D^2_2R_2-(D_2R_2)^2) \\
    & -3D_2R_2\cdot\p_2D_2R_2+3\p_1D_2R_2),
\end{align*}
from which \eqref{thmC:eq1} follows.

To verify  \eqref{thmC:eq2}, we recall \eqref{thmC:eq4} and compute
\begin{align*}
D_2 R_2 (\p_2 C - \p_2 R_2 \cdot C ) =\, & -D_2 R_2 \cdot e^{R_2} \left( \p_2 E^2_2 \int^{p_2} \!\!\! \int \! e^{-R_2} + f_1 \right) \\
=\, & -(D_2 e^{R_2} ) \left( \p_2 E^2_2 \int^{p_2} \!\!\! \int \! e^{-R_2} + f_1 \right),
\end{align*}
\begin{align*}
-D_2 (\p_2 C - \p_2 R_2 \cdot C) =\, & D_2 \left(e^{R_2} \left( \p_2 E^2_2 \int^{p_2} \!\!\! \int \! e^{-R_2} + f_1 \right)\right) \\
=\, & (D_2 e^{R_2} ) \left(\p_2 E^2_2 \int^{p_2} \!\!\! \int \! e^{-R_2} + f_1 \right) \\
& + e^{R_2} D_2 \left(\p_2 E^2_2 \int^{p_2} \!\!\! \int \! e^{-R_2} + f_1\right),
\end{align*}
as well as
\begin{multline*}
-e^{-R_2}(\p_1C-\p_1R_2\cdot C)
= \p_1 E^2_2 \int^{p_2} \!\!\! \int \! e^{-R_2} + \p_1 f_1 \cdot p_2 - \p_1 E^1_1 \int^{p_1} \!\!\! \int \! f_1 \\
= \p_1 E^2_2 \int^{p_2} \!\!\! \int \! e^{-R_2} + \p_1 f_1 \cdot p_2 + D_1 f_1.
\quad\text{(by Lemma \ref{4.2}(b))}
\end{multline*}
We rewrite the last formula as
 $$\p_1C-\p_1R_2\cdot C = -e^{R_2} \left( \p_1 E^2_2 \int^{p_2} \!\!\! \int \! e^{-R_2} + D_2 f_1\right).$$

Now, putting things together, we have
\begin{align*}
& D_2 R_2 (\p_2 C - \p_2 R_2 \cdot C ) - D_2 ( \p_2 C - \p_2 R_2 \cdot C ) + (\p_1 C - \p_1 R_2 \cdot C ) \\
=\, & e^{R_2} (D_2 \p_2 - \p_1 ) E^2_2 \int^{p_2} \!\!\! \int \! e^{-R_2} \\
=\, & e^{R_2} D^3_2 e^{-R_2} \quad \text{(by Lemma \ref{4.2}(c))}\\
=\, & e^{R_2} D^2_2 (-e^{-R_2} D_2R_2) \\
=\, & e^{R_2} D_2 (-e^{-R_2} D^2_2R_2 + e^{-R_2} (D_2R_2)^2) \\
=\, & e^{R_2} (-e^{R_2} D^3_2 R_2 + e^{-R_2} D_2R_2 \cdot D^2_2 R_2 + 2e^{-R_2} D_2 R_2 \cdot D^2_2 R_2 - e^{-R_2} (D_2 R_2)^3) \\
=\, & -D^3_2 R_2 + 3D_2 R_2 \cdot D^2_2 R_2 - (D_2R_2)^3\ ,
\end{align*}
and \eqref{thmC:eq2} follows.

We have, therefore, verified that our solution \eqref{1.5} fulfills Fels' conditions.
\end{proof}

%
%
\section{The Higher Order Equation}
A comparison between Theorems \ref{thmA} and \ref{thmB} reveals that \eqref{1.6} can be seen as a direct generalization of \eqref{1.5} except that $f_3$ is quadratic in $p_3$ whereas for $n\geq 3$, $f_{2n-1}$ is linear in $p_{2n-1}$. This results from the simple fact that $2n-1=n+1$ if and only if $n=2$. One may compare \eqref{thmApf:compare} and \eqref{thmBpf:compare}.

The proof for Theorem \ref{thmB} differs from that of Theorem \ref{thmA} in two ways. Firstly, we need a systematic way to extract the highest order terms, that is, to reduce the value of $m$ in an expression involving $E_m^n$. The reduction of $m$ in the powers of $D_m$ is contained in Lemma \ref{2.4}. The second point concerns the steps of reducing $m$ and $n$ in $E_m^n$. For $n=2$, the reduction can be symbolized as $E_4^2 \to E_2^2$ while that for $n\geq 3$ is $E_{2n}^n \to E_{2n-2}^n \to E_{2n-4}^{n-1} \to \cdots \to E_4^3 \to E_2^2$. The inductive step, which reduces $m$ by $2$ and $n$ by $1$, is contained in Lemma \ref{5.1}.

%
%
We remind the readers that the subscript notation for functions is explained in Section \ref{notations}.
\begin{lem}\label{5.1}
Suppose $n\geq 3$. Given $L_{n-1}$ and $\widetilde{L}_{n-1}$, there exist $f_{n-1}$, $L_{n-2}$ and $\widetilde{L}_{n-2}$ such that
\begin{align}\label{lem:E_{2n-2}^n,E_{2n-4}^{n-1}:eq1}
E^n_{2n-2}\left(L_{n-1}p_n + \widetilde{L}_{n-1} \right) =\, & f_{n-1}p_{2n-2} + (-1)^{n-1}E^{n-1}_{2n-3} \int^{p_{n-1}} \!\!\!\! \int \! f_{n-1}  \\
& + E^{n-1}_{2n-4} \left(L_{n-2}p_{n-1}+\widetilde{L}_{n-2} \right). \nonumber
\end{align}
In fact,
\begin{equation*}
f_{n-1}=(-1)^{n-1} \p^2_{n-1} \left(\widetilde{L}_{n-1} - D_{n-1} \int^{p_{n-1}} \! L_{n-1} \right).
\end{equation*}

Conversely, given $f_{n-1}$, $L_{n-1}$, $L_{n-2}$ and $\widetilde{L}_{n-2}$, there exists $\widetilde{L}_{n-1}$ such that \eqref{lem:E_{2n-2}^n,E_{2n-4}^{n-1}:eq1} holds.
\end{lem}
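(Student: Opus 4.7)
The plan is to mimic the $n=2$ reduction from the proof of Theorem \ref{thmA}, following the same two-step pattern: first lower the superscript of the Euler--Lagrange operator from $n$ to $n-1$ by absorbing the $p_n$ dependence, then lower the subscript from $2n-2$ down to $2n-4$ by peeling off the $p_{2n-2}$ dependence. Each step uses one of the structural identities collected in Section 2.

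For the superscript reduction, I would split
\begin{equation*}
E_{2n-2}^n = E_{2n-2}^{n-1} + (-1)^n D_{2n-2}^n \p_n
\end{equation*}
and apply Lemma \ref{2.5} (with $m=2n-2$ and the superscript there taken to be $n-1$) to rewrite
\begin{equation*}
D_{2n-2}^n L_{n-1} = D_{2n-2}^n \p_{n-1} \int^{p_{n-1}} \! L_{n-1} = (-1)^{n-1} E_{2n-2}^{n-1} D_{2n-2} \int^{p_{n-1}} \! L_{n-1}.
\end{equation*}
Since $L_{n-1}$ is independent of $p_n,\dots,p_{2n-2}$, Lemma \ref{2.2}(a) collapses $D_{2n-2}\int^{p_{n-1}}\!L_{n-1}$ to $D_{n-1}\int^{p_{n-1}}\!L_{n-1}+p_nL_{n-1}$, and the $p_nL_{n-1}$ piece cancels the $L_{n-1}p_n$ already present inside $E_{2n-2}^{n-1}(L_{n-1}p_n+\widetilde{L}_{n-1})$. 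This reduces the left-hand side of the identity to $E_{2n-2}^{n-1} M$, where $M := \widetilde{L}_{n-1} - D_{n-1}\int^{p_{n-1}}\!L_{n-1}$ depends only on $(x,p_0,\dots,p_{n-1})$.

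For the subscript reduction, the pivotal ingredient is the trimming identity
\begin{equation*}
E_{2n-2}^{n-1} \Phi = E_{2n-3}^{n-1} \Phi + (-1)^{n-1} p_{2n-2}\, \p_{n-1}^2 \Phi,
\end{equation*}
valid whenever $\Phi$ depends only on $(x,p_0,\dots,p_{n-1})$. I would derive this by expanding each $D_{2n-2}^k$ via Lemma \ref{2.4}: among the multi-indices $I$ with $\abs{I}\geq 1$, those with some $i_j\neq 0$ for $j\geq n$ annihilate $\Phi$, while the remaining constraint $\norm{I}\leq k\leq n-1$ leaves only $I=e_{n-1}$ at $k=n-1$, producing exactly the claimed correction. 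Setting $f_{n-1} := (-1)^{n-1}\p_{n-1}^2 M$ and integrating twice in $p_{n-1}$ yields
\begin{equation*}
M = (-1)^{n-1}\int^{p_{n-1}}\!\!\!\int f_{n-1} + L_{n-2} p_{n-1} + \widetilde{L}_{n-2}
\end{equation*}
for some $L_{n-2},\widetilde{L}_{n-2}$ depending only on $(x,p_0,\dots,p_{n-2})$. Applying the trimming identity to $\Phi=\int^{p_{n-1}}\!\!\int f_{n-1}$ extracts the $f_{n-1}p_{2n-2}$ summand together with $(-1)^{n-1}E^{n-1}_{2n-3}\int^{p_{n-1}}\!\!\int f_{n-1}$, while applied to $\Phi=L_{n-2}p_{n-1}+\widetilde{L}_{n-2}$ the correction vanishes (since $\p_{n-1}^2\Phi=0$), leaving $E^{n-1}_{2n-3}(L_{n-2}p_{n-1}+\widetilde{L}_{n-2})$. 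A parallel Lemma \ref{2.4} computation further reduces this to $E^{n-1}_{2n-4}(L_{n-2}p_{n-1}+\widetilde{L}_{n-2})$, via an opposite-sign cancellation between the $(k,I)=(n-1,e_{n-2})$ and $(k,I)=(n-2,e_{n-1})$ contributions, both equal to $p_{2n-3}\p_{n-2}L_{n-2}$.

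The principal obstacle is the clean bookkeeping for these multi-index cancellations, and in particular verifying that no residual $\abs{I}\geq 2$ terms survive: this rests on the bound $\norm{I}\geq 2(n-2)>n-1$, which is precisely where the hypothesis $n\geq 3$ is needed. The converse direction is then essentially formal: given $f_{n-1},L_{n-1},L_{n-2},\widetilde{L}_{n-2}$, one defines $M$ by the displayed integral formula and sets $\widetilde{L}_{n-1}:=M+D_{n-1}\int^{p_{n-1}}\!L_{n-1}$; running the two-step argument in reverse recovers the desired identity.
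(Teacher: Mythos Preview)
Your proof is correct and uses the same toolkit (Lemmas \ref{2.4} and \ref{2.5}) as the paper, but applies the two reductions in the reverse order. The paper first computes $(E_{2n-2}^n - E_{2n-3}^n)(L_{n-1}p_n + \widetilde{L}_{n-1})$ directly via Lemma \ref{2.4}, obtaining five surviving multi-index terms that combine (after Lemma \ref{2.3b}) to $f_{n-1}p_{2n-2}$; only afterwards does it invoke Lemma \ref{2.5} to kill the piece $L_{n-1}p_n + D_{n-1}\int^{p_{n-1}}L_{n-1}$ inside $E_{2n-3}^n$. You instead lower the superscript first---your opening manoeuvre is exactly the identity the paper isolates later as Lemma \ref{lem:E_{2n-2}^n,E_{2n-2}^{n-1}} for the degenerate-Lagrangian theorems in Section 6---and only then trim the subscript, which reduces the Lemma \ref{2.4} bookkeeping from five surviving terms to one. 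Both routes finish with the same $(E_{2n-3}^{n-1} - E_{2n-4}^{n-1})$ cancellation on $L_{n-2}p_{n-1}+\widetilde{L}_{n-2}$. Your ordering is arguably cleaner; the paper's ordering makes the explicit form of $f_{n-1}$ emerge directly from the five-term computation before $M$ is ever introduced.

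One small imprecision: the bound $\norm{I}\geq 2(n-2)>n-1$ you cite for excluding $\abs{I}\geq 2$ contributions in the final step fails as a strict inequality when $n=3$. That case is still ruled out, but by the linearity of $L_{n-2}p_{n-1}+\widetilde{L}_{n-2}$ in $p_{n-1}$ (forcing $i_{n-1}\leq 1$ when $k\leq n-2$, and $i_{n-1}=0$ when $k=n-1$), not by the weight bound alone.
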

%
%

%
%
\begin{proof}
Consider
\begin{align*}
    & \left(E^n_{2n-2} - E^n_{2n-3}\right) \left(L_{n-1} p_n + \widetilde{L}_{n-1} \right) \\
=\, & \sum_{k=0}^n (-1)^k \left(D^k_{2n-2} - D^k_{2n-3}\right) \p_k \left(L_{n-1}p_n - \widetilde{L}_{n-1} \right) \\
=\, & \sum_{k=1}^n (-1)^k \sum_{0 < \norm{I} \leq k} a_{I}^{(k)} p_{2n-2}^{\abs{I}} D^{k-\norm{I}}_{2n-3} \p^{I} \p_k \left(L_{n-1}p_n + \widetilde{L}_{n-1} \right).
\end{align*}
We emphasize that in the last line, the multi-index $I$ depends on $k$ and has $(2n-2)$ elements, $I=I_{2n-2}^k=(i_{2n-3}^k,i_{2n-4}^k,\dots,i_0^k)$. However, in the following we will suppress the dependence for simplicity and consider
its elements
$$I=(i_{2n-3},\dots,i_{n},i_{n-1},i_{n-2},\dots,i_0).$$

In fact, only a few terms in the summations remain. To see this, since the function $L_{n-1}p_n+\widetilde{L}_{n-1}$ depends only up to $p_n$, we may assume $i_{2n-3}=\cdots=i_{n+1}=0$. Moreover, we also have $i_n=0$ in the case $k=n$, because the function to be differentiated is linear in $p_n$. Recalling the definition of $\norm{I}$, the condition $0<\norm{I}\leq{k}$ is actually highly restrictive:
\begin{equation*}\begin{cases}
0<(n-1)i_{n-1}+ni_{n-2}+(n+1)i_{n-3}+\cdots\leq{n},&\text{if }k=n,\\
0<(n-2)i_n+(n-1)i_{n-1}+ni_{n-2}+\cdots\leq{k}\leq{n-1},&\text{if }k\leq{n-1}.
\end{cases}\end{equation*}
Clearly, $i_{n-3}=\cdots=i_0=0$ also holds true. One observes that it has exactly five solutions in $(k,i_n,i_{n-1},i_{n-2})$, as tabulated below.

\begin{center}\begin{tabular}{|c|c|c|c|l|}
\hline
$k$   & $i_{n}$ & $i_{n-1}$ & $i_{n-2}$ & \qquad\quad$a_I^{(k)}$\\
\hline\hline
$n$   & 0       & 0         & 1         & \quad\ \,$\frac{n!}{n!1!0!}=1$ \\\hline
$n$   & 0       & 1         & 0         & $\frac{n!}{(n-1)!1!1!}=n$ \\\hline
$n-1$ & 0       & 1         & 0         & $\frac{(n-1)!}{(n-1)!1!0!}=1$ \\\hline
$n-1$ & 1       & 0         & 0         & $\frac{(n-1)!}{(n-2)!1!1!}=n-1$ \\\hline
$n-2$ & 1       & 0         & 0         & $\frac{(n-2)!}{(n-2)!1!0!}=1$ \\
\hline
\end{tabular}\end{center}
Hence, the difference we consider above becomes
\begin{equation}\label{5.2}\begin{split}
&\quad\,(E_{2n-2}^n-E_{2n-3}^n)\left(L_{n-1}p_n+\widetilde{L}_{n-1}\right)\\
&=p_{2n-2}\bigg((-1)^n(\p_{n-2}L_{n-1}+nD_{2n-3}\p_{n-1}L_{n-1})\\
&\qquad\qquad+(-1)^{n-1}\left(\p_{n-1}^2(L_{n-1}p_n+\widetilde{L}_{n-1})+(n-1)D_{2n-3}\p_{n-1}L_{n-1}\right)\\
&\qquad\qquad+(-1)^{n-2}\p_{n-2}L_{n-1}\bigg)\\
&=p_{2n-2}(-1)^{n-1}\left(\p_{n-1}^2(L_{n-1}p_n+\widetilde{L}_{n-1})-D_{2n-3}\p_{n-1}L_{n-1}-2\p_{n-2}L_{n-1}\right)\\
&=p_{2n-2}(-1)^{n-1}\left(\p_{n-1}^2\widetilde{L}_{n-1}-(D_{n-1}\p_{n-1}+2\p_{n-2})L_{n-1}\right)\\
&=p_{2n-2}(-1)^{n-1}\p_{n-1}^2\left(\widetilde{L}_{n-1}-D_{n-1}\int^{p_{n-1}}\!L_{n-1}\right).\quad\text{(by Lemma \ref{2.3b})}
\end{split}\end{equation}

Write $ \displaystyle f_{n-1} = (-1)^{n-1} \p^2_{n-1} \left( \widetilde{L}_{n-1} - D_{n-1} \int^{p_{n-1}} \! L_{n-1} \right)$ so that
\begin{equation}\label{lem:E_{2n-2}^n,E_{2n-4}^{n-1}:eq2}
\widetilde{L}_{n-1} = (-1)^{n-1} \int^{p_{n-1}} \!\!\!\! \int \! f_{n-1} + D_{n-1} \int^{p_{n-1}} \! L_{n-1} + L_{n-2} p_{n-1} + \widetilde{L}_{n-2},
\end{equation}
for some functions $L_{n-2}$ and $\widetilde{L}_{n-2}$. Using Lemma \ref{2.5}, we have
\begin{equation*}
\begin{split}
E_{2n-3}^n\left(L_{n-1}p_n+D_{n-1}\int^{p_{n-1}}\!L_{n-1}\right)
&=E_{2n-3}^nD_n\int^{p_{n-1}}\!L_{n-1}\\
&=E_{2n-3}^nD_{2n-3}\int^{p_{n-1}}\!L_{n-1}\\
&=(-1)^nD_{2n-3}^{n+1}\p_n\int^{p_{n-1}}\!L_{n-1}\\
&=0.
\end{split}
\end{equation*}
Therefore,
\begin{align*}
    & E^n_{2n-2}\left(L_{n-1}p_n + \widetilde{L}_{n-1} \right) \\
=\, & \left(E^n_{2n-2} - E^n_{2n-3} \right) \left(L_{n-1} p_n + \widetilde{L}_{n-1} \right)+ E^n_{2n-3} \left( L_{n-1} p_n + \widetilde{L}_{n-1} \right) \\
=\, & f_{n-1} p_{2n-2} + E^n_{2n-3} \left( (-1)^{n-1} \int^{p_{n-1}} \!\!\!\! \int \! f_{n-1} + D_n \int^{p_{n-1}} \! L_{n-1} + L_{n-2} p_{n-1} + \widetilde{L}_{n-1} \right) \\
=\, & f_{n-1}p_{2n-2} + (-1)^{n-1}E^{n-1}_{2n-3} \int^{p_{n-1}} \!\!\!\! \int \! f_{n-1} + E^{n-1}_{2n-3} \left(L_{n-2}p_{n-1}+\widetilde{L}_{n-1} \right),
\end{align*}

We expand similarly
\begin{align*}
& \left(E^{n-1}_{2n-3} - E^{n-1}_{2n-4}\right) \left(L_{n-2}p_{n-1} + \widetilde{L}_{n-2} \right)  \\
=\, & \sum_{k=1}^{n-1} (-1)^k \sum_{0< \norm{I} \leq k} a^{(k)}_{I} p^{\abs{I}}_{2n-3} D^{k-\norm{I}}_{2n-4} \p^{I} \p_k \left(L_{n-2}p_{n-1} + \widetilde{L}_{n-2} \right).
\end{align*}
Proceeding as above, we arrive at the inequalities
\begin{equation*}\begin{cases}
0<(n-2)i_{n-1}+(n-1)i_{n-2}+\cdots\leq{k}\leq{n-2},&\text{if }k\leq{n-2},\\
0<(n-1)i_{n-2}+ni_{n-3}+\cdots\leq{n-1},&\text{if }k=n-1.
\end{cases}\end{equation*}
The only two solutions are $(k,i_{n-1},i_{n-2})=(n-2,1,0)$ and $(n-1,0,1)$.
By symmetry and the alternating sign $(-1)^k$, these non-zero terms cancel each other and we obtain \ref{5.1}.


Conversely, given any $f_{n-1}$, $L_{n-1}$, $L_{n-2}$ and $\widetilde{L}_{n-2}$, we define $\widetilde{L}_{n-1}$ by \eqref{lem:E_{2n-2}^n,E_{2n-4}^{n-1}:eq2}. The above calculations show that \eqref{lem:E_{2n-2}^n,E_{2n-4}^{n-1}:eq1} holds.
\end{proof}

\medskip

\begin{remark}
The computations in equation \eqref{5.2} can be done alternatively by inserting $p_{2n-2}\p_{2n-2}$ in the front and using Lemma \ref{2.3}. In this way, Lemma \ref{2.4} is not required in its full strength (regarding the explicit coefficient $a_{I}^{(k)}$).
\end{remark}

\medskip

%
%

We can now prove Theorem \ref{thmB} in three steps. First, we compute the difference $(E_{2n}^n-E_{2n-2}^n)L_n$ and decouple the equation. Next, by a repeated application of Lemma \ref{5.1}, we reduce $E_{2n-2}^n$ to $E_2^2$. Finally, the proof of Theorem \ref{thmA} ((b) $\Leftrightarrow$ (d)) applies and we obtain the desired formulas.

\medskip

%
%
\begin{proof}[Proof of Theorem \ref{thmB}]
Similar to the proof of Theorem \ref{thmA}, we will show the equivalence of the following statements, where $n\geq3$.
\begin{enumerate}[(a)]
\item There exist $\rho_n$, $f_{2n-1}$ and $L_n$ such that \eqref{1.4} holds, that is,
    \begin{equation*}
    \rho_n(p_{2n}-f_{2n-1})=E_{2n}^nL_n.
    \end{equation*}
\item There exist $R_n$, $L_{n-1}$ and $\widetilde{L}_{n-1}$ such that
    \begin{equation}\label{thmBpf:eq1}
    \left\{
    \begin{array}{rcl}
      \rho_n & = & e^{-R_n}\ , \\
    f_{2n-1} & = & nD_{n+1}R_n\cdot p_{2n-1} \\
             &   & \displaystyle -e^{R_n}\left[(-1)^nE_{2n-2}^n\int^{p_n}\!\!\!\int\!e^{-R_n}+E_{2n-2}^n\left(L_{n-1}p_n+\widetilde{L}_{n-1}\right)\right]\ ,\\
         L_n & = & \displaystyle (-1)^n\int^{p_n}\!\!\!\int\!e^{-R_n}+L_{n-1}p_n+\widetilde{L}_{n-1}  \ .
    \end{array}
    \right.
    \end{equation}
\item There exist $R_n$, $(f_m)_{2\leq{m}\leq{n-1}}$, $(L_m)_{1\leq{m}\leq{n-1}}$ and $\widetilde{L}_1$ such that
    \begin{equation}\label{thmBpf:eq2}
    \left\{
    \begin{array}{rcl}
      \rho_n & = & e^{-R_n}\ , \\
    f_{2n-1} & = & \displaystyle nD_{n+1}R_n\cdot p_{2n-1}-e^{R_n}\left[(-1)^nE_{2n-2}^n\int^{p_n}\!\!\!\int\!e^{-R_n}\right. \\
             &   & \displaystyle +\left.\sum_{m=2}^{n-1}\left(f_mp_{2m}+(-1)^mE_{2m-1}^m\int^{p_m}\!\!\!\int\!f_m\right)+E_2^2\left(L_1p_2+\widetilde{L}_1\right)\right]\ ,\\
         L_n & = & \displaystyle \int^{p_n}\!\!\!\int\!e^{-R_n}+\sum_{m=2}^{n-1}(-1)^m\int^{p_m}\!\!\!\int\!f_m+D_n\sum_{m=2}^{n-1}\int^{p_m}\!L_m+L_1p_2+\widetilde{L}_1 \ .
    \end{array}
    \right.
    \end{equation}
\item There exist $R_n$, $(f_m)_{0\leq{m}\leq{n-1}}$ and $N_{n-1}$ such that \eqref{1.6} holds, that is,
    \begin{equation*}
    \left\{
    \begin{array}{rcl}
      \rho_n & = & e^{-R_n}\ , \\
    f_{2n-1} & = & \displaystyle nD_{n+1}R_n\cdot p_{2n-1}-e^{R_n}\left[(-1)^nE_{2n-2}^n\int^{p_n}\!\!\!\int\!e^{-R_n}\right. \\
             &   & \displaystyle +\left.\sum_{m=1}^{n-1}\left(f_mp_{2m}+(-1)^mE_{2m-1}^m\int^{p_m}\!\!\!\int\!f_m\right)+f_0\right]\ ,\\
         L_n & = & \displaystyle \int^{p_n}\!\!\!\int\!e^{-R_n}+\sum_{m=1}^{n-1}(-1)^m\int^{p_m}\!\!\!\int\!f_m+\int^{p_0}\!f_0+D_nN_{n-1} \ .
    \end{array}
    \right.
    \end{equation*}
\end{enumerate}
Note that the equivalence of (c) and (d) corresponds to that of (b) and (d) in the proof of Theorem \ref{thmA}.

(a) $\Rightarrow$ (b): Suppose \eqref{1.4} has a solution. In the same spirit of the proof of Lemma \ref{5.1}, we expand
\begin{align*}
\left(E_{2n}^n-E_{2n-1}^n\right)L_n =\, & \sum_{k=0}^n (-1)^k \left(D_{2n}^k-D_{2n-1}^k\right)\p_k L_n \\
                                    =\, & \sum_{k=1}^n (-1)^k \sum_{0 < \norm{I} \leq k} a^{(k)}_{I} p^{\abs{I}}_{2n} D^{k- \norm{I}}_{2n-1} \p^{I} \p_k L_n.
\end{align*}
The inequality
\begin{equation*}
0<ni_n+(n+1)i_{n-1}+\cdots\leq{k}\leq{n}
\end{equation*}
has a unique solution $(k,i_n)=(n,1)$ with all other $i_j$'s vanishing. The coefficient is $a_{(0,\dots,1,\dots,0)}^{(n)}=n!/(n!1!0!)=1$. Hence,
$$(E_{2n}^n-E_{2n-1}^n)L_n=(-1)^np_{2n}\p_n^2L_n.$$
Similarly, to find the difference
$$(E_{2n-1}^n-E_{2n-2}^n)L_n=\sum_{k=1}^{n}(-1)^k\sum_{0<\norm{I}\leq{k}}a_{I}^{(k)}p_{2n-1}^{\abs{I}}D_{2n-2}^{k-\norm{I}}\p^{I}\p_{k}L_n,$$
we consider
\begin{equation*}\begin{cases}
0<(n-1)i_n+ni_{n-1}+\cdots\leq{n},&\text{if }k=n,\\
0<(n-1)i_n+ni_{n-1}+\cdots\leq{k}\leq{n-1},&\text{if }k\leq{n-1}.
\end{cases}\end{equation*}
Of the three solutions $(k,i_n,i_{n-1})=(n,1,0),\,(n,0,1)$ and $(n-1,1,0)$, the last two yield cancelling terms. This gives
$$(E_{2n-1}^n-E_{2n-2}^n)L_n=(-1)^{n}np_{2n-1}D_{2n-1}\p_n^2L_n.$$


Therefore, \eqref{1.4} can be rewritten as
$$\rho_n (p_{2n} - f_{2n-1}) = (-1)^n p_{2n} \p_n^2 L_n + (-1)^n n p_{2n-1} D_{2n-1} \p_n^2 L_n + E^n_{2n-2} L_n,$$
or
\begin{equation*}\begin{cases}
          \rho_n = (-1)^n \p_n^2 L_n \ ,\\
-\rho_n f_{2n-1} = (-1)^n n p_{2n-1} D_{2n-1} \p_n^2 L_n + E^n_{2n-2} L_n \ .
\end{cases}\end{equation*}
Substituting the first equation into the second and differentiating both sides with respect to $p_{2n-1}$, we have
\begin{equation*}
-\rho_n \p_{2n-1} f_{2n-1} = n D_{2n-2} \rho_n = n D_{n+1} \rho_n,
\end{equation*}
or
\begin{equation*}
D_{n+1} \rho_n + \left( \dfrac{1}{n} \p_{2n-1} f_{2n-1} \right)\rho_n = 0.
\end{equation*}
In particular, we infer that $\p_{2n-1}f_{2n-1}$ depends on at most $p_{n+1}$. By Lemma \ref{2.6}, this equation is solvable if and only if there exists $R_n$ such that $\rho_n = e^{-R_n}$ and
\begin{equation}\label{thmBpf:compare}
\dfrac{1}{n}\p_{2n-1} f_{2n-1} = D_{n+1} R_n.
\end{equation}
Hence,
\begin{equation*}
f_{2n-1} = n D_{n+1} R_n \cdot p_{2n-1} - e^{R_n} E^n_{2n-2} L_n.
\end{equation*}
From $e^{-R_n} = (-1)^n \p_n^2 L_n$, there exist functions $L_{n-1}$ and $\widetilde{L}_{n-1}$ such that
\begin{equation*}
L_n = (-1)^n \int^{p_n} \!\!\! \int \! e^{-R_n} + L_{n-1} p_n + \widetilde{L}_{n-1},
\end{equation*}
and so
\begin{equation*}
f_{2n-1} = n D_{n+1} R_n \cdot p_{2n-1} - e^{R_n} \left[(-1)^n E^n_{2n-2} \int^{p_n} \!\!\! \int \! e^{-R_n} + E^n_{2n-2} \left( L_{n-1} p_n + \widetilde{L}_{n-1} \right) \right].
\end{equation*}
We have proved that  \eqref{thmBpf:eq1}  holds.

(b) $\Rightarrow$ (a): Suppose \eqref{thmBpf:eq1} holds. By the above calculations, we have
\begin{equation*}
\begin{split}
 &E_{2n}^nL_n\\
=&(-1)^np_{2n}\p_n^2L_n+(-1)^nnp_{2n-1}D_{2n-1}\p_n^2L_n+E_{2n-2}^nL_n\\
=&e^{-R_n}p_{2n}+nD_{2n-1}\left(e^{-R_n}\right)p_{2n-1}+E_{2n-2}^n\left((-1)^n\int^{p_n}\!\!\!\int\!e^{-R_n}+L_{n-1}p_n+\widetilde{L}_{n-1}\right)\\
=&e^{-R_n}\bigg[p_{2n}-nD_{n+1}R_n\cdot p_{2n-1}\\
 &\left.+e^{R_n}\left((-1)^nE_{2n-2}^n\int^{p_n}\!\!\!\int\!e^{-R_n}+E_{2n-2}^n\left(L_{n-1}p_n+\widetilde{L}_{n-1}\right)\right)\right]\\
=&\rho_n(p_{2n}-f_{2n-1}).
\end{split}
\end{equation*}

(b) $\Leftrightarrow$ (c): We wish to show, by induction, that \eqref{thmBpf:eq1} is equivalent to the following statement for any $\ell$ with $2\leq\ell\leq{n}$ (call it $P(\ell)$): there exist functions $(f_m)_{\ell\leq{m}\leq{n-1}}$, $(L_m)_{\ell-1\leq{m}\leq{n-1}}$ and $\widetilde{L}_{\ell-1}$ such that
\begin{equation*}\begin{cases}
f_{2n-1}&=\,
 \displaystyle n D_{n+1} R_n \cdot p_{2n-1} - e^{R_n} \left[ (-1)^n E^n_{2n-2} \int^{p_n} \!\!\! \int \! e^{-R_n} \right.\\
& \quad\displaystyle \left. +\sum^{n-1}_{m=\ell} \left( f_m p_{2m} + (-1)^m E^m_{2m-1} \int^{p_m} \!\!\! \int \! f_m \right) + E^{\ell}_{2\ell -2} \left( L_{\ell-1} p_{\ell} + \widetilde{L}_{n-1} \right) \right] \ ,\\
L_n &=\,
 \displaystyle (-1)^n \int^{p_n} \!\!\! \int \! e^{-R_n} + \sum^{n-1}_{m=\ell} (-1)^m \int^{p_m} \!\!\! \int \! f_m + D_n \sum^{n-1}_{m=\ell} \int^{p_m} \! L_m \\
& \quad+ L_{\ell-1} p_{\ell} + \widetilde{L}_{\ell-1}   \ .
\end{cases}\end{equation*}
Clearly, $P(n)$ is vacuously true. Assume that $P(\ell)$ is true, where $3\leq\ell\leq{n}$. Then by Lemma \ref{5.1}, there exist functions $f_{\ell-1}$, $L_{\ell-2}$ and $\widetilde{L}_{\ell-2}$ such that
\begin{equation*}\begin{cases}
f_{2n-1}&=\,
 \displaystyle n D_{n+1} R_n \cdot p_{2n-1} - e^{R_n} \left[ (-1)^n E^n_{2n-2} \int^{p_n} \!\!\! \int \! e^{-R_n} \right.\\
&\quad \displaystyle \left. + \sum^{n-1}_{m=\ell} \left( f_m p_{2m} + (-1)^m E^m_{2m-1} \int^{p_m} \!\!\! \int \! f_m \right) \right. \\
&\quad \displaystyle \left. + f_{\ell-1} p_{2\ell-2} + (-1)^{\ell-1} E^{\ell-1}_{2\ell-3} \int^{p_{\ell-1}} \!\!\!\! \int \! f_{\ell-1} + E^{\ell-1}_{2\ell -4} \left( L_{\ell-2} p_{\ell-1} + \widetilde{L}_{n-2} \right) \right] \ ,\\
L_n&=\,
 \displaystyle (-1)^n \int^{p_n} \!\!\! \int \! e^{-R_n} + \sum^{n-1}_{m=\ell} (-1)^m \int^{p_m} \!\!\! \int \! f_m + D_n \sum^{n-1}_{m=\ell} \int^{p_m} \! L_m \\
& \quad\displaystyle + (-1)^{\ell-1} \int^{p_{\ell-1}} \!\!\!\! \int \! f_{\ell-1} + D_{\ell} \int^{p_{\ell-1}} \! L_{\ell-1} + L_{\ell-2} p_{\ell-1} + \widetilde{L}_{\ell-2} \ ,
\end{cases}\end{equation*}
which shows that the $P(\ell-1)$ is also true. On the other hand, if $P(\ell-1)$ is true, then the above formulas and the converse part of Lemma \ref{5.1} imply that $P(\ell)$ is true. Therefore, the statements $P(n),P(n-1),\dots,P(2)$ are all equivalent.

(c) $\Rightarrow$ (d): By exactly the same calculations as in the implications (b) $\Rightarrow$ (c) $\Rightarrow$ (d) of the proof of Theorem \ref{thmA}, there exist functions $f_1$, $f_0$, $L_0$ and $L_{-1}=L_{-1}(x)$ such that
\begin{equation*}\begin{cases}
E^2_2 \left( L_1 p_2 +\widetilde{L}_1 \right) &= f_1 p_2 - E^1_1 \displaystyle \int^{p_1} \!\!\! \int \! f_1 + f_0 \ ,\\
\displaystyle \widetilde{L}_1 &= - \displaystyle \int^{p_1} \!\!\! \int \! f_1 + \int^{p_0}\!f_0 + D_1 \left( \int^{p_1} \! L_1 + \int^{p_0} \! L_0 + \int^x \! L_{-1} \right)\ .
\end{cases}\end{equation*}
Putting these back into the above equalities, we obtain the required formula for $f_{2n-1}$ and
\begin{equation*}
\begin{split}
L_n=\,
    & (-1)^n \int^{p_n} \!\!\! \int \! e^{-R_n} + \sum^{n-1}_{m=2} \left( (-1)^m \int^{p_n} \!\!\! \int \! f_m + D_n \int^{p_m} \! L_m \right) \\
    & - \int^{p_1} \!\!\! \int \! f_1 + \int^{p_0}\!f_0 + D_2 \left( \int^{p_1} \! L_1 + \int^{p_0} \! L_0 + \int^x \! L_{-1} \right) \\
=\, & (-1)^n \int^{p_n} \!\!\! \int \! e^{-R_n} + \sum^{n-1}_{m=1} (-1)^m \int^{p_n} \!\!\!\int f_m + \int^{p_0}\!f_0 + D_n N_{n-1},
\end{split}
\end{equation*}
where we have let $\displaystyle N_{n-1} = \sum^{n-1}_{m=1} \int^{p_m} \! L_m + \int^{p_0} \! L_0 + \int^x \! L_{-1}$.

(d) $\Rightarrow$ (c): For each $m=n-1,n-2,\cdots,2$, take $L_m=\p_mN_m$ and let $N_{m-1}$ be such that
\begin{equation*}
N_m=\int^{p_m}\!L_m+N_{m-1}.
\end{equation*}
Then take also $L_1=\p_1N_1$ and
\begin{equation*}
\widetilde{L}_1=\p_0N_1\cdot p_1+\p_xN_1-\int^{p_1}\!\!\!\int\!f_1+\int^{p_0}\!f_0.
\end{equation*}
Using the calculations in the implication (c) $\Rightarrow$ (d), one can check that
\begin{equation*}
E_2^2\left(L_1p_2+\widetilde{L}_1\right)=f_1p_2-E_1^1\int^{p_1}\!\!\!\int\!f_1+f_0
\end{equation*}
and
\begin{equation*}
D_nN_{n-1}-\int^{p_1}\!\!\!\int\!f_1+\int^{p_0}\!f_0=D_n\sum_{n=2}^{n-1}\int^{p_m}\!L_m+L_1p_2+\widetilde{L}_1.
\end{equation*}
These equalities imply \eqref{thmBpf:eq2}. The proof is completed.
\end{proof}

%
%

%
%
\section{Relaxation on the Order of the Lagrangian}

The formulation of the multiplier problem in \eqref{1.4} is partially based on the assumption that we are looking for a non-degenerate Lagrangian and a multiplier for \eqref{1.2}.  Taking into account possible degeneracy in the Lagrangian, here we consider a slightly general situation, namely, the Lagrangian could have order greater than $n$ when the order of the equation is $2n$.  It turns out that the form of the solution of the multiplier problem remains almost the same. In other words, we do not recover extra variational differential equations of the form \eqref{1.2} by considering degenerate Lagrangians. Theorems \ref{thmC} and \ref{thmD} are generalizations of Theorems \ref{thmA} and \ref{thmB} respectively.

%
%
\begin{thm}\label{thmC}
For $m\geq 2$, suppose \begin{equation}\label{thmC:eq}
\rho_3(p_4-f_3)=E_{2m}^m L_m
\end{equation}
has a solution $L_m$ and $\rho_3$.  Then there are functions $R_2, f_0, f_1$ and $N_{m-1}$
such that \begin{equation}\label{thmC:sol}
\left\{
\begin{array}{rcl}
f_3 & = & \displaystyle \p_2R_2\cdot p^2_3 + 2D_2R_2\cdot p_3 \ ,\\
& & \displaystyle - e^{R_2} \left(E^2_2 \int^{p_2} \!\!\! \int \! e^{-R_2} + f_1p_2 -E^1_1 \int^{p_1} \!\!\! \int \! f_1+f_0 \right)\ , \\
\rho_2 & = & e^{-R_2} \\
L_2 & = & \displaystyle \int^{p_2} \!\!\! \int \! e^{-R_2} - \int^{p_1} \!\!\! \int \! f_1 + \int^{p_0} \! f_0 + D_m N_{m-1} \ .
\end{array}
\right.
\end{equation} Conversely, given any functions $R_2, f_0, f_1$ and $N_{m-1}$, the functions defined in \eqref{thmC:sol} satisfy \eqref{thmC:eq}.
\end{thm}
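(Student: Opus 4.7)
The plan is to reduce Theorem \ref{thmC} to Theorem \ref{thmA} by showing that any higher-order Lagrangian $L_m$ solving \eqref{thmC:eq} differs from some order-$2$ Lagrangian by a total derivative of the form $D_m N_{m-1}$, which contributes zero to the Euler--Lagrange operator. This is a standard null-Lagrangian phenomenon, realized here through the identity $E^m_{2m}D_{2m}=(-1)^m D_{2m}^{m+1}\p_m$ of Lemma \ref{2.5}.

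For the forward direction, I would first extract the $p_{2m}$-dependence of $E^m_{2m}L_m$. Exactly as in the proof of Theorem \ref{thmB}, the only multi-index contributing to $(E^m_{2m}-E^m_{2m-1})L_m$ is the single-partial index $\p_m$, giving
\[
(E^m_{2m}-E^m_{2m-1})L_m=(-1)^m p_{2m}\p_m^2 L_m.
\]
Since the left-hand side of \eqref{thmC:eq} is independent of $p_{2m}$ whenever $m\geq 3$, this forces $\p_m^2 L_m=0$, so $L_m=A_{m-1}p_m+B_{m-1}$ for some functions of order $m-1$. Setting $\tilde N_{m-1}:=\int^{p_{m-1}}\!A_{m-1}$, the identity $D_m\tilde N_{m-1}=D_{m-1}\tilde N_{m-1}+p_m A_{m-1}$ lets me write $L_m=\tilde L_{m-1}+D_m\tilde N_{m-1}$ with $\tilde L_{m-1}$ of order $m-1$. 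Because $\tilde N_{m-1}$ has order $m-1$, one has $D_m\tilde N_{m-1}=D_{2m}\tilde N_{m-1}$, and Lemma \ref{2.5} gives $E^m_{2m}(D_m\tilde N_{m-1})=0$. A direct check using $D_{2m}^k\p_k g=D_{2m-2}^k\p_k g$ (valid for any $g$ of order $\leq m-1$ and any $k\leq m-1$) then shows $E^m_{2m}\tilde L_{m-1}=E^{m-1}_{2m-2}\tilde L_{m-1}$, so the problem descends to order $m-1$. Iterating $m\to m-1\to\cdots\to 2$ and absorbing each intermediate $D_j\tilde N_{j-1}$ into the running total derivative via $D_j\tilde N_{j-1}=D_m\tilde N_{j-1}$ (valid since $\tilde N_{j-1}$ has order $\leq m-1<m$), I arrive at $L_m=L_2+D_m N_{m-1}$ with $\rho_3(p_4-f_3)=E^2_4 L_2$. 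Theorem \ref{thmA} then produces the required $R_2,f_0,f_1$ (and an auxiliary $N_1$ of order $1$, which is absorbed into $N_{m-1}$ in the same way).

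The converse amounts to direct verification: write the given $L_m$ as $L_2^*+D_m N_{m-1}$ with $L_2^*:=\int^{p_2}\!\!\!\int e^{-R_2}-\int^{p_1}\!\!\!\int f_1+\int^{p_0}\!f_0$, use $E^m_{2m}(D_m N_{m-1})=0$ and $E^m_{2m}L_2^*=E^2_4 L_2^*$ (by the same index arguments as above), and invoke the converse part of Theorem \ref{thmA} to identify the right-hand side with $e^{-R_2}(p_4-f_3)$. The main obstacle I anticipate is the careful bookkeeping in the downward induction: at each stage $j$, I must verify that the lower-order piece $\tilde L_{j-1}$ still satisfies the reduced equation $\rho_3(p_4-f_3)=E^{j-1}_{2j-2}\tilde L_{j-1}$, and that the accumulated total derivatives combine cleanly into a single $D_m N_{m-1}$ without introducing spurious dependence on $p_{m+1},\ldots,p_{2m-1}$. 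The commutation identities $D_j g=D_k g$ for $g$ of sufficiently low order and $k\geq j$ are what make this work, but they require methodical application throughout.
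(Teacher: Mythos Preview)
Your approach is correct and coincides with the paper's in substance: both argue by induction on $m$, forcing $\p_m^2 L_m=0$ from the top-order coefficient, then reducing to an order-$(m-1)$ problem by peeling off a null Lagrangian via Lemma~\ref{2.5}. The paper packages the reduction step as two separate lemmas (Lemma~\ref{lem:E_{2n}^n} to pass from $E^{m+1}_{2m+2}$ to $E^{m+1}_{2m}$ once $\p_{m+1}^2 L_{m+1}=0$, and Lemma~\ref{lem:E_{2n-2}^n,E_{2n-2}^{n-1}} to drop to $E^m_{2m}$), whereas you split $L_m=\tilde L_{m-1}+D_m\tilde N_{m-1}$ directly and use the order-counting identity $D_{2m}^k\p_k g=D_{2m-2}^k\p_k g$ for $g$ of order $\leq m-1$; these are equivalent formulations of the same computation.
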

%
%

%
%
\begin{thm}\label{thmD}
For $m\geq n\geq 3$, suppose that
\begin{equation}\label{thmD:eq}
\rho_{2n-1}(p_{2n}-f_{2n-1}) = E_{2m}^m L_m
\end{equation}
has a solution $L_m$ and $\rho_{2n-1}$.  Then there are functions $R_n$, $(f_k)_{0\leq{k}\leq{n-1}}$ and $N_{m-1}$ such that \begin{equation}\label{thmD:sol}
\left\{
\begin{array}{rcl}
f_{2n-1} & = & \displaystyle nD_{n+1}R_n \cdot p_{2n-1} - e^{R_n} \left[ (-1)^n E^n_{2n-2} \int^{p_n} \!\!\! \int \! e^{-R_n}\right. \\
& & \displaystyle \left.+ \sum^{n-1}_{\ell=1} \left(f_{\ell} p_{2\ell} + (-1)^{\ell} E^{\ell}_{2\ell -1} \int^{p_n} \!\!\! \int \! f_{\ell} \right) + f_0 \right]\ , \\
\rho_{2n-1} & = & e^{-R_n} \ ,\\
L_m & = & \displaystyle (-1)^n \int^{p_n} \!\!\! \int \! e^{-R_n} + \sum_{\ell=1}^{n-1} (-1)^{\ell} \int^{p_\ell} \!\!\! \int \! f_{\ell} + \int^{p_0} \! f_0 + D_m N_{m-1} \ .
\end{array}
\right.
\end{equation}
Conversely, given any functions $R_n$, $(f_k)_{0\leq{k}\leq{n-1}}$ and $N_{m-1}$, the functions defined in \eqref{thmD:sol} satisfy \eqref{thmD:eq}.
\end{thm}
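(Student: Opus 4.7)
The plan is to reduce Theorem \ref{thmD} to Theorem \ref{thmB} by showing that any excess order in $L_m$ beyond $n$ can be absorbed into a total divergence $D_m N_{m-1}$, which is a null Lagrangian for $E_{2m}^m$.

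For the converse direction, given the formulas \eqref{thmD:sol}, set $L_n := L_m - D_m N_{m-1}$; by inspection this is exactly the Lagrangian produced by Theorem \ref{thmB}, and in particular has order $n$. Since $N_{m-1}$ has order $m-1$, the extra summands in $D_{2m}N_{m-1}$ beyond those in $D_m N_{m-1}$ all vanish, so $D_m N_{m-1} = D_{2m} N_{m-1}$. Lemma \ref{2.5} then gives
$$E_{2m}^m\, D_{2m} N_{m-1} \;=\; (-1)^m\, D_{2m}^{m+1}\p_m N_{m-1} \;=\; 0,$$
hence $E_{2m}^m L_m = E_{2m}^m L_n$. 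A short induction, using that $D_{2m}g = D_{2n}g$ whenever $g$ has order at most $2n-1$, further reduces this to $E_{2n}^n L_n$, and Theorem \ref{thmB} closes the argument (with $\rho_{2n-1}:=\rho_n = e^{-R_n}$).

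For the forward direction, I would strip off one order at a time starting from $m$. Exactly as in the proof of Theorem \ref{thmB}, only one multi-index contributes to $(E_{2m}^m - E_{2m-1}^m)L_m$, producing $(-1)^m p_{2m}\p_m^2 L_m$. Since $m > n$, the right-hand side of \eqref{thmD:eq} has no $p_{2m}$ term, so $\p_m^2 L_m = 0$ and $L_m = A_{m-1}p_m + B_{m-1}$ with $A_{m-1}, B_{m-1}$ of order $m-1$. Setting $N_{m-1}^{(m)} := \int^{p_{m-1}} A_{m-1}$, the difference $L_m^\flat := L_m - D_m N_{m-1}^{(m)}$ has order $m-1$. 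The null Lagrangian identity above yields $E_{2m}^m L_m^\flat = E_{2m}^m L_m$, and since $L_m^\flat$ has order $m-1$ one checks (by induction on the power, using $D_{2m}-D_{2m-2} = p_{2m-1}\p_{2m-2}+p_{2m}\p_{2m-1}$) that $D_{2m}^k \p_k L_m^\flat = D_{2m-2}^k \p_k L_m^\flat$ for $k \leq m-1$, so $E_{2m}^m L_m^\flat = E_{2m-2}^{m-1} L_m^\flat$. Thus we are in the same situation with $m$ replaced by $m-1$.

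Iterating $m-n$ times produces $L_n^\flat$ of order $n$ with $L_m = L_n^\flat + \sum_{j=n+1}^m D_j N_{j-1}^{(j)}$ and $E_{2n}^n L_n^\flat = \rho_{2n-1}(p_{2n}-f_{2n-1})$. Because each $N_{j-1}^{(j)}$ has order $j-1$, we have $D_j N_{j-1}^{(j)} = D_m N_{j-1}^{(j)}$, so the sum consolidates into a single $D_m \widetilde N_{m-1}$ with $\widetilde N_{m-1} := \sum_{j=n+1}^m N_{j-1}^{(j)}$ of order $m-1$. Theorem \ref{thmB} applied to $L_n^\flat$ then supplies $R_n$, the $f_\ell$'s and some $N'_{n-1}$; the final consolidation $D_n N'_{n-1} + D_m\widetilde N_{m-1} = D_m(N'_{n-1} + \widetilde N_{m-1})$, with $N_{m-1} := N'_{n-1} + \widetilde N_{m-1}$, yields precisely the form of \eqref{thmD:sol}. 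The main obstacle is the bookkeeping: one must verify at each reduction step that the truncated operators $D_{2k}$ and $E_{2k}^k$ of neighbouring orders agree on the intermediate objects, which reduces to an order bound of the form $m+j-2 \leq 2m-3$ for $j \leq m-1$; this is a chain of easy but numerous checks that must be organised systematically to avoid errors.
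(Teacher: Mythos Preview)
Your proposal is correct and follows essentially the same route as the paper: induct on $m$, use the absence of $p_{2m}$ on the left to force $\p_m^2 L_m=0$, then absorb the linear-in-$p_m$ part into a total derivative and drop to $E_{2m-2}^{m-1}$ acting on an order-$(m-1)$ object, landing on Theorem~\ref{thmB} when $m=n$. The only organisational difference is that the paper packages your two steps (null Lagrangian plus order reduction of the Euler--Lagrange operator) into the single identity of Lemma~\ref{lem:E_{2n-2}^n,E_{2n-2}^{n-1}}, namely $E_{2m}^{m+1}(L_m p_{m+1}+\widetilde L_m)=E_{2m}^m\bigl(\widetilde L_m - D_m\!\int^{p_m}L_m\bigr)$, which it derives directly from the telescoping Lemma~\ref{2.5}; this spares the inductive order-bound checks $m+j-2\le 2m-3$ that you flag as the main bookkeeping burden.
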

%
%

The following consequence of Lemma \ref{2.5} is essential in the proof of these theorems.
%
%
\begin{lem}\label{lem:E_{2n-2}^n,E_{2n-2}^{n-1}}
For $n\geq 2$,
\begin{equation*}
E^n_{2n-2} \left( L_{n-1} p_n + \widetilde{L}_{n-1} \right) = E^{n-1}_{2n-2} \left( \widetilde{L}_{n-1} - D_{n-1} \int^{p_{n-1}} \! L_{n-1} \right).
\end{equation*}
\end{lem}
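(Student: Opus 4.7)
The plan is to reduce both sides to a common quantity by splitting off the top-order piece of $E^n_{2n-2}$ and rewriting $L_{n-1}p_n$ as a telescoping difference.

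First I would decompose the Euler–Lagrange operator as
\begin{equation*}
E^n_{2n-2} = E^{n-1}_{2n-2} + (-1)^n D^n_{2n-2}\p_n.
\end{equation*}
Since $\p_n(L_{n-1}p_n + \widetilde{L}_{n-1}) = L_{n-1}$, applying $E^n_{2n-2}$ to $L_{n-1}p_n + \widetilde{L}_{n-1}$ produces an $E^{n-1}_{2n-2}$ contribution plus the single extra term $(-1)^n D^n_{2n-2} L_{n-1}$.

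Next I would handle the $E^{n-1}_{2n-2}(L_{n-1}p_n)$ piece. By Lemma \ref{2.2}(a),
\begin{equation*}
L_{n-1} p_n = D_n \int^{p_{n-1}} L_{n-1} - D_{n-1} \int^{p_{n-1}} L_{n-1}.
\end{equation*}
Because $\int^{p_{n-1}}\!L_{n-1}$ depends only on $(x,p_0,\dots,p_{n-1})$, the operator $D_n$ acts on it identically to $D_{2n-2}$ (the extra terms $p_{n+1}\p_n+\dots$ vanish on it). Hence Lemma \ref{2.5}, applied with $m=2n-2$ and order $n-1$, gives
\begin{equation*}
E^{n-1}_{2n-2} D_n \int^{p_{n-1}} L_{n-1} = E^{n-1}_{2n-2} D_{2n-2} \int^{p_{n-1}} L_{n-1} = (-1)^{n-1} D^n_{2n-2} L_{n-1}.
\end{equation*}

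Finally I would assemble the pieces:
\begin{equation*}
E^n_{2n-2}(L_{n-1}p_n+\widetilde{L}_{n-1}) = (-1)^{n-1} D^n_{2n-2} L_{n-1} - E^{n-1}_{2n-2} D_{n-1}\int^{p_{n-1}} L_{n-1} + E^{n-1}_{2n-2}\widetilde{L}_{n-1} + (-1)^n D^n_{2n-2} L_{n-1},
\end{equation*}
and the two $D^n_{2n-2}L_{n-1}$ terms cancel by $(-1)^{n-1}+(-1)^n = 0$, leaving exactly $E^{n-1}_{2n-2}\bigl(\widetilde{L}_{n-1}-D_{n-1}\int^{p_{n-1}}L_{n-1}\bigr)$.

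The only subtle point is justifying $D_n \int^{p_{n-1}} L_{n-1} = D_{2n-2}\int^{p_{n-1}}L_{n-1}$ so that Lemma \ref{2.5} applies with the right value of $m$; this is immediate from the definition of $D_m$ and the variable dependence of $\int^{p_{n-1}}L_{n-1}$, but it is the one bookkeeping step that must be stated carefully. Everything else is a direct consequence of Lemmas \ref{2.2}(a) and \ref{2.5}.
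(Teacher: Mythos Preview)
Your proof is correct and follows essentially the same route as the paper: split $E^n_{2n-2}=E^{n-1}_{2n-2}+(-1)^nD^n_{2n-2}\p_n$, then use Lemma~\ref{2.5} (with $m=2n-2$ and order $n-1$) together with the identity $L_{n-1}p_n=D_{2n-2}\int^{p_{n-1}}L_{n-1}-D_{n-1}\int^{p_{n-1}}L_{n-1}$ to cancel the top-order term. The only cosmetic difference is that the paper converts $(-1)^nD^n_{2n-2}L_{n-1}$ into $-E^{n-1}_{2n-2}D_{2n-2}\int^{p_{n-1}}L_{n-1}$ first and then combines, whereas you rewrite $L_{n-1}p_n$ first; the computations are identical.
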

%
%

%
%
\begin{proof}
By Lemma \ref{2.5},
\begin{equation*}
\begin{split}
  E^n_{2n-2} \left(L_{n-1} p_n + \widetilde{L}_{n-1} \right)
&=E^{n-1}_{2n-2} \left(L_{n-1}p_n + \widetilde{L}_{n-1} \right) + (-1)^n D^n_{2n-2} L_{n-1} \\
&=E^{n-1}_{2n-2} \left(L_{n-1}p_n + \widetilde{L}_{n-1} \right) - E^{n-1}_{2n-2} D_{2n-2} \int^{p_{n-1}} \! L_{n-1} \\
&=E^{n-1}_{2n-2} \left(\widetilde{L}_{n-1} - D_{n-1} \int^{p_{n-1}}\! L_{n-1} \right).
\end{split}
\end{equation*}
\end{proof}
%
%

Besides, the calculations in the proof of (a) $\Rightarrow$ (b) in Theorem \ref{thmB} is useful. We formulate it  as a lemma.

%
%
\begin{lem}\label{lem:E_{2n}^n}
For $n\geq 3$, we have
\begin{equation*}
\begin{split}
  E_{2n}^n L_n
&=(-1)^np_{2n}\p_n^2L_n+E_{2n-1}^nL_n\\
&=(-1)^np_{2n}\p_n^2L_n+(-1)^nnp_{2n-1}D_{2n-2}\p_n^2L_n+E_{2n-2}^nL_n.
\end{split}
\end{equation*}
\end{lem}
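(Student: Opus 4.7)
The plan is to recognize that this lemma is essentially the computation already carried out in the implication (a) $\Rightarrow$ (b) of Theorem \ref{thmB}, now isolated for later use in Theorems \ref{thmC} and \ref{thmD}. The strategy is to compute the two telescoping differences $E_{2n}^n L_n - E_{2n-1}^n L_n$ and $E_{2n-1}^n L_n - E_{2n-2}^n L_n$ via Lemma \ref{2.4}, pick off the multi-indices that contribute under the constraint $\p_k L_n = 0$ for $k > n$, and then sum.

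First I would write
\begin{equation*}
(E_{2n}^n - E_{2n-1}^n) L_n = \sum_{k=1}^{n} (-1)^k \sum_{0 < \norm{I} \leq k} a_I^{(k)}\, p_{2n}^{|I|}\, D_{2n-1}^{k-\norm{I}}\, \p^I \p_k L_n,
\end{equation*}
using Lemma \ref{2.4} applied to the difference $D_{2n}^k - D_{2n-1}^k$. Since $L_n$ depends only up to $p_n$, we may assume $i_{n+1} = \cdots = i_{2n-1} = 0$; the remaining components are weighted by $n, n+1, \dots, 2n$ in $\norm{I}$, so the constraint $0 < \norm{I} \leq k \leq n$ forces $(k, i_n) = (n, 1)$ with all other $i_j = 0$. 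A direct evaluation gives $a_I^{(n)} = n!/(n!\, 1!\, 0!) = 1$, which yields exactly $(-1)^n p_{2n} \p_n^2 L_n$ and hence the first claimed identity.

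For the second difference, the same expansion with $D_{2n-2}$ in place of $D_{2n-1}$ gives weights $n-1, n, n+1, \dots$ on the nonzero components of $I$, so the constraint $0 < \norm{I} \leq k \leq n$ has exactly three solutions: $(k, i_n, i_{n-1}) = (n, 1, 0),\ (n-1, 1, 0),\ (n, 0, 1)$. The corresponding coefficients $a_I^{(k)}$ work out to $n, 1, 1$, and the contributions involving the mixed partial $\p_n \p_{n-1} L_n = \p_{n-1} \p_n L_n$ carry opposite signs $(-1)^{n-1}$ and $(-1)^n$ and therefore cancel. What remains is the term $(-1)^n n\, p_{2n-1}\, D_{2n-2}\, \p_n^2 L_n$, which together with the first step yields the second identity.

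The only real obstacle is the bookkeeping: correctly enumerating the admissible multi-indices and evaluating the coefficients $a_I^{(k)}$ from Lemma \ref{2.4}. Once the weight constraint is written down, the enumeration is immediate for $n \geq 3$ (the hypothesis excludes the anomalous case $n = 2$, where $2n - 1 = n + 1$ creates an extra solution responsible for the quadratic $p_3^2$ term in Theorem \ref{thmA}). The cancellation of the two mixed-partial terms is automatic from the alternating sign in the definition of $E_m^n$, so no further identity is needed.
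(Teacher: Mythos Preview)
Your proposal is correct and follows essentially the same approach as the paper: the paper itself does not give a separate proof of this lemma but simply points back to the calculations in the implication (a) $\Rightarrow$ (b) of Theorem \ref{thmB}, which is exactly what you reproduce. Your enumeration of the admissible multi-indices, the coefficient evaluations, and the cancellation argument all match the paper's computation line by line.
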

%
%

Since the proofs of Theorems \ref{thmC} and \ref{thmD} are essentially the same, we will only prove Theorem \ref{thmD}.

%
%
\begin{proof}
We proceed by induction on $m$. When $m=n$,  \eqref{thmD:eq} reads as
\begin{equation*}
\rho_{2n-1} ( p_{2n} - f_{2n-1}) = E_{2n-1}^n L_n + (-1)^n p_{2n} \p_n^2 L_n,
\end{equation*}
according to Lemma \ref{lem:E_{2n}^n}.  Hence $\rho_{2n-1} = (-1)^n \p_n^2 L_n$ depends only up to $p_n$ and we can write $\rho_{2n-1}=\rho_n$. Now, Theorem \ref{thmB} gives the solution of \eqref{thmD:eq} in the stated form.

Assuming the result holds for some $m\geq n$, we consider the equation
\begin{equation*}
\rho_{2n-1} (p_{2n} - f_{2n-1}) = E^{m+1}_{2m+2} L_{m+1}.
\end{equation*}
The left hand side depends on only up to $p_{2n}$ while the right hand side can be expanded, using Lemma \ref{lem:E_{2n}^n} again, as
\begin{align*}
E^{m+1}_{2m+2} L_{m+1} =\,
& E^{m+1}_{2m} L_{m+1} + (-1)^{m+1} p_{2m+2} \p^2_{m+1} L_{m+1} \\
& + (-1)^{m+1} (m+1) p_{2m+1} D_{2m} \p^2_{m+1} L_{m+1}.
\end{align*}
Since $2m+2 \geq 2n+2 > 2n$, comparing the coefficients of $p_{2m+2}$ yields
$$\p^2_{m+1} L_{m+1} = 0.$$
Thus $E^{m+1}_{2m+2} L_{m+1} = E^{m+1}_{2m} L_{m+1}$ and there exist $L_m$ and $\widetilde{L}_m$ such that
\begin{equation*}
L_{m+1} = L_m p_{m+1} + \widetilde{L}_m.
\end{equation*}
By Lemma \ref{lem:E_{2n-2}^n,E_{2n-2}^{n-1}}, we arrive at
\begin{equation*}
\displaystyle \rho_{2n-1} (p_{2n} -f_{2n-1}) = E^m_{2m} \left( \widetilde{L}_m - D_m \int^{p_m} \! L_m \right).
\end{equation*}
By the induction hypothesis, it admits a solution where $f_{2n-1},L_m$ and $\rho_{2n-1}$ are given by \begin{equation*}
\left\{
\begin{array}{rl}
f_{2n-1}
=& \!\!\!\displaystyle nD_{n+1}R_n \cdot p_{2n-1} - e^{R_n} \left[ (-1)^n E^n_{2n-2} \int^{p_n} \!\!\! \int \! e^{-R_n} \right. \\
 & \displaystyle \left. + \sum^{n-1}_{j =1} \left(f_{j} p_{2\ell} + (-1)^{j} E^{\ell}_{2j -1} \int^{p_n} \!\!\! \int \! f_{j} \right) + f_0 \right], \\
\rho_{2n-1}
=& \!\!\! e^{-R_n}, \\
\displaystyle \widetilde{L}_m - D_m \int^p \! L_m
=& \!\!\! \displaystyle (-1)^n \int^{p_n} \!\!\! \int \! e^{-R_n} + \sum_{j=1}^{n-1} (-1)^{j} \int^{p_j} \!\!\! \int \! f_{j} + \int^{p_0} \! f_0 + D_m N_{m-1},
\end{array}
\right.
\end{equation*}
where $R_n$, $(f_j)_{0\leq{j}\leq{n-1}}$ and $N_{m-1}$ are free. The inductive step is completed upon observing \begin{align*}
& L_{m+1}\\
=\, & \left( L_m p_{m+1} + D_m \int^{p_m} \! L_m \right) + \left( \widetilde{L}_m - D_m \int^{p_m} \! L_m \right) \\
=\, & \displaystyle D_{m+1} \int^{p_m} \! L_m + (-1)^n \int^{p_n} \!\!\! \int \! e^{-R_n} + \sum_{j=1}^{n-1} (-1)^{j} \int^{p_j} \!\!\! \int \! f_{j} + \int^{p_0} \! f_0 + D_m N_{m-1} \\
=\, & (-1)^n \int^{p_n} \!\!\! \int \! e^{-R_n} + \sum^{n-1}_{j =1} (-1)^{j} \int^{p_j} \!\!\! \int \! f_{j} + \int^{p_0} \! f_0 + D_{m+1} \left( \int^{p_m} \! L_m + N_{m-1} \right)
\end{align*}
and letting $\displaystyle N_m = \int^{p_m} \! L_m + N_{m-1}$.
\end{proof}
%

%
%

\section{An Algorithm for Variationality}
Based on the proof of Lemma \ref{2.6} and the form of the solution stated in Theorems \ref{thmA} and \ref{thmB}, here we describe an algorithm to determine whether a given differential equation \eqref{1.2}, or, $u^{(2n)}=f_{2n-1}[u],$ admits a variational multiplier. It is affirmative
only if all the following steps are checked successively.

In this algorithm, Step (1) checks whether the $(2n-1)$-th derivative of $f_{2n-1}$ depends only up to $p_{n+1}$. Steps (2) and (3) check the existence of $R_n$. Note that $\p_kg_k$ and $g_k-\p_kg_k\cdot p_k$ correspond respectively to $\alpha_{k-1}$ and $\beta_{k-1}$ in the proof of Lemma \ref{2.6}. In the rest of the algorithm, we may think of $h_{2n-2-2j}$ ($0\leq{j}\leq{n-1}$) as
\begin{equation*}
\sum_{m=1}^j\left(f_mp_{2m}+(-1)^mE_{2m-1}^m\int^{p_m}\!\!\!\int\!f_m\right)+f_0,
\end{equation*}
thus $\p_{2n-2-2j} h_{2n-2-2j}$ as $f_{n-1-j}$.

\medskip

{\bf Algorithm}
\begin{description}
\item[Step (1)] If $n\geq3$, check whether $\p_k\p_{2n-1}f_{2n-1}=0$, for all $n+2\leq{k}\leq{2n-1}$. If so, write $$g_{n+1}=\dfrac{1}{n}\p_{2n-1}f_{2n-1}.$$
    If $n=2$, define
    $$g_3=\dfrac12\p_3f_3$$
    and proceed directly to Step (2).
\item[Step (2)] For $k=n+1,n,\dots, 1$, perform the followings:
    \begin{description}
    \item[Substep ($n+2-k$)] Check whether $\p_k^2 g_k=0$. If so, write
        $$g_{k-1}=g_k-\p_kg_k\cdot p_k-D_{k-1}\int^{p_{k-1}}\!\!\!\p_kg_k.$$
    \end{description}
\item[Step (3)] Check whether $\p_0g_0=0$. If so, write
    $$R_n=\sum_{k=1}^n\int^{p_k}\!\p_kg_k+\int^x\!\left(g_1-\p_1g_1\cdot p_1\right)$$
    and
    \begin{equation*}
    \begin{split}
    h_{2n-2}=\,&-e^{R_n}\left(f_{2n-1}-\p_{2n-1}f_{2n-1}\cdot p_{2n-1}+\dfrac{1}{2}\p_{2n-1}^2f_{2n-1}\cdot p_{2n-1}^2\right)\\
               &-(-1)^nE_{2n-2}^n\int^{p_n}\!\!\!\int\!e^{-R_n}.
    \end{split}
    \end{equation*}
\item[Step (4)] For $j=1,2,\dots,n-1$, perform the followings:
    \begin{description}
    \item[Substep ($j$)] Check whether $\p_{2n+1-2j}h_{2n-2j}=0$ and $\p_k\p_{2n-2j}h_{2n-2j}=0$, for all $n+1-j\leq{k}\leq{2n-2j}$. If so, write
        \begin{equation*}
        \begin{split}
        h_{2n-2-2j}=\,&h_{2n-2j}-\p_{2n-2j}h_{2n-2j}\cdot p_{2n-2j}\\
                         &-(-1)^{n-j}E_{2n-1-2j}^{n-j}\int^{p_{n-1}}\!\!\!\!\int\!\p_{2n-2j}h_{2n-2j}.
        \end{split}
        \end{equation*}
    \end{description}
\item[Step (5)] Check whether $\p_1h_0=0$. If so, all is done and the given differential equation admits a variational multiplier $\rho_n=e^{-R_n}$.
\end{description}

\begin{remark}
We remark that, as opposed to Fels and Jur\'{a}\v{s}'s approach, we have \emph{more} (in fact, $n^2+n+1$), yet \emph{linear} in the highest order, vanishing quantities in the necessary and sufficient condition for \eqref{1.2} to be variational. 
\end{remark}

\section{An Application in Parabolic PDE's}

We were led to the multiplier problem by the study of the parabolic problem
\begin{equation}\label{8.1}
u_t=a(x,u,u_x)u_{xx}+b(x,u,u_x), \  \ x\in [0,1],\ t>0,
\end{equation}
where $a$ is always positive.  The equation is subject to the boundary conditions
\begin{equation}\label{8.2}
\alpha_1u(0,t) +\beta_1 u_x(0,t)=0, \ \alpha_2u(1,t) +\beta_2 u_x(1,t)=0,\ \alpha_i^2+\beta_i^2>0, \ i=0,1.
\end{equation}
A well-known result of Zelenyak asserts the following.

\begin{thm*}[Zelenyak\cite{Z}]
Any globally bounded solution of \eqref{8.1}--\eqref{8.2} converges uniformly to a steady state.
\end{thm*}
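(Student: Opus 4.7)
The plan is to construct a Lyapunov functional for \eqref{8.1}--\eqref{8.2} using the multiplier machinery in the degree-one ($n=1$) case of the multiplier problem, which is precisely Darboux's classical theorem cited in the introduction. First, I would rewrite the stationary equation $a(x,u,u_x)u_{xx}+b(x,u,u_x)=0$ in the normal form $u_{xx}=-b/a=:f_1(x,u,u_x)$ and invoke Darboux to obtain a strictly positive multiplier $\rho(x,p_0,p_1)>0$ and a first-order Lagrangian $L(x,p_0,p_1)$ with
\begin{equation*}
E_2^1 L \;=\; L_{p_0}-\frac{d}{dx}L_{p_1} \;=\; \rho\bigl(u_{xx}-f_1(x,u,u_x)\bigr).
\end{equation*}
Among the family of Lagrangians producing the same Euler--Lagrange equation, I would use the gauge freedom (the $D_nN_{n-1}$ term in Theorem~\ref{thmA}) to pick a representative whose natural boundary contribution is compatible with \eqref{8.2}; this is the delicate step and is the main obstacle (see below).

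Next, set $V(t):=\int_0^1 L(x,u(x,t),u_x(x,t))\,dx$. Differentiating under the integral and integrating by parts in $x$ gives
\begin{equation*}
\frac{dV}{dt}=\int_0^1\!\!\bigl(L_{p_0}-\tfrac{d}{dx}L_{p_1}\bigr)u_t\,dx+\bigl[L_{p_1}\,u_t\bigr]_{x=0}^{x=1}.
\end{equation*}
Assuming the boundary term has been arranged to vanish, substitute the multiplier identity and $u_t=au_{xx}+b=a(u_{xx}-f_1)$ to obtain
\begin{equation*}
\frac{dV}{dt}=\int_0^1 \rho(x,u,u_x)\bigl(u_{xx}-f_1\bigr)u_t\,dx=\int_0^1\frac{\rho(x,u,u_x)}{a(x,u,u_x)}\,u_t^{\,2}\,dx\;\ge\;0.
\end{equation*}
So $V$ is monotone nondecreasing along any bounded orbit. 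Since the orbit is globally bounded, parabolic Schauder estimates upgrade the bound on $u$ to uniform bounds on $u_x,u_{xx},u_t$, so the trajectory $\{u(\cdot,t):t\ge 1\}$ is precompact in $C^1([0,1])$ and $V(t)$ is bounded above. Hence $V(t)$ converges as $t\to\infty$ and $\int_1^\infty\!\int_0^1(\rho/a)u_t^{\,2}\,dx\,dt<\infty$.

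From here I would conclude by a LaSalle/$\omega$-limit argument: precompactness makes the $\omega$-limit set $\omega(u_0)$ nonempty, compact, connected, and invariant; the $L^2_{t,x}$-integrability of $u_t$ together with uniform H\"older bounds forces every element of $\omega(u_0)$ to satisfy $u_t\equiv 0$, hence to be a solution of the stationary boundary value problem. To upgrade subsequential convergence to full convergence I would use the scalar one-dimensional structure: the set of bounded stationary solutions of a scalar second-order ODE with separated boundary conditions is, under mild nondegeneracy, either discrete or a curve parametrised monotonically; in either case connectedness of $\omega(u_0)$ pins down a single limit, yielding uniform convergence $u(\cdot,t)\to u_\infty$.

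The principal obstacle is the boundary term $[L_{p_1}u_t]_0^1$ in the computation of $\dot V$. The multiplier $\rho$ and $L$ produced by Darboux are only unique up to adding to $L$ a total derivative $D_1 N_0$, and one must show that this freedom suffices to make $L_{p_1}$ vanish, or at least become orthogonal to $u_t$, along the two boundary manifolds $\{\alpha_i p_0+\beta_i p_1=0\}$. When $\beta_i=0$ (Dirichlet) this is automatic since $u_t\equiv 0$ on the boundary; when $\alpha_i\ne 0$ one uses that along the boundary $u$ and $u_x$ are tied linearly, so choosing $N_0$ appropriately along $\{x=0,1\}$ kills the boundary integrand. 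A secondary obstacle is the uniqueness-of-limit step, where the one-dimensional phase-plane analysis of the stationary equation replaces the more delicate arguments needed in higher dimensions.
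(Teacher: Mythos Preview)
The paper does not actually prove this theorem; it is quoted from Zelenyak~\cite{Z} as a known result. The only thing the paper says about its proof is the sentence immediately following the statement: the ``essential ingredient'' is the existence of a positive multiplier $\rho(x,u,u_x)$ and a first-order Lagrangian $L$ with $F'[u]\varphi=-\int\rho(au_{xx}+b)\varphi\,dx$, so that $F[u]=\int L\,dx$ is a Lyapunov functional with $\frac{d}{dt}F[u]=-\int\rho\,u_t^2\le 0$. Your proposal identifies exactly this ingredient, so at the level of what the paper records you are in agreement. (Note the sign: with the paper's convention $F$ is nonincreasing, whereas your $V$ comes out nondecreasing; this is only the harmless replacement $L\mapsto -L$, but it is worth keeping your signs consistent with the paper's.)

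Where your outline becomes genuinely incomplete is the last step, passing from ``the $\omega$-limit set consists of steady states'' to ``the $\omega$-limit set is a single point.'' This is not a matter of mild nondegeneracy: Zelenyak's theorem has no such hypothesis, and in general the steady-state set for \eqref{8.1}--\eqref{8.2} can be a continuum. Your sentence ``discrete or a curve parametrised monotonically; in either case connectedness of $\omega(u_0)$ pins down a single limit'' does not close the gap, because connectedness alone does not prevent $\omega(u_0)$ from being an arc inside a one-parameter family of equilibria. Zelenyak's actual argument (see~\cite{Z}) uses the Lyapunov functional more sharply: $V$ is constant on $\omega(u_0)$, and one shows---using that steady states are solutions of a scalar second-order ODE, hence form a one-parameter family indexed by the free boundary datum at $x=0$---that $V$ is strictly monotone along any nontrivial connected piece of this family, so a level set of $V$ meets it in at most one point. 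Your discussion of the boundary term $[L_{p_1}u_t]_0^1$ is reasonable and in fact anticipates the paper's own remark that adapting the Lagrangian to general boundary conditions is ``an interesting but rarely explored topic.''
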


An essential ingredient in his proof is that there is always some $L(x,u,u_x)$ and
positive
$\rho(x,u,u_x)$ such that the first variation for $$F[u]=\int L(x,u,u_x)dx$$ along $\varphi$ satisfies
$$F'[u]\varphi = - \int \rho\big(au_{xx}+b\big)\varphi dx,$$ for all $u$ satisfying \eqref{8.2}. Consequently, $L$ is a Liapunov function for \eqref{8.1}:
$$\dfrac{d }{dt}\int L(x,u,u_x) dx = -\int \rho u_t^2\leq 0.$$
Now, consider the fourth order quasi-linear equation
$$u_t= -a(x,u,u_x,u_{xx},u_{xxx})u_{xxxx} + b(x,u,u_x,u_{xx},u_{xxx}),\ \ x\in[0,1],\ t>0,$$ together with some boundary conditions.
When there is a positive multiplier for this problem, one may construct a Liapunov functional and extend Zelenyak's argument to
fourth order equations.  In the inverse problem of the calculus of variations, the boundary integrals arising in the derivation of the first variation formula are usually ignored.  When applying to initial-boundary value problems, it works only if the boundary conditions are periodic.  How to construct a multiplier whose corresponding Lagrangian adapts to some given boundary conditions is an interesting but rarely explored topic.  Nevertheless, we point out that the Lagrangian and multiplier constructed in Theorem \ref{thmA} is adapted to the Dirichlet boundary conditions $u=u_x=0$ at $x=0,1$.  Using this fact, we are able to obtain a Zelenyak-type theorem for fourth order equations, see \cite{C} for details. It is also clear that similar theorems can be obtained in the higher order cases. The details are left to the readers.

\newpage

\appendix
\section{Proofs of Some Elementary Results}
\subsection{Proof of Lemma \ref{2.3}}

\begin{proof}
Let $m, n, m\geq n,$ be fixed. Note that this lemma is trivial for $k=0$. If it holds for some $k<n$, then
\begin{align*}
\p_n D_m^{k+1} =\, & \sum_{j=0}^k \binom{k}{j} D_m^{k-j} \p_{n-j} D_m \\
               =\, & \sum_{j=0}^k \binom{k}{j} D_m^{k-j} (D_m \p_{n-j} + \p_{n-j-1}) \\
               =\, & \sum_{j=0}^k \binom{k}{j} D_m^{k+1-j} \p_{n-j} + \sum_{j=1}^{k+1} \binom{k}{j-1} D_m^{k+1-j} \p_{n-j} \\
               =\, & D^{k+1}_m \p_n + \sum^k_{j=1} \left[ \binom{k}{j} + \binom{k}{j-1} \right] D_m^{k+1-j} \p_{n-j} + \p_{n-k-1} \\
               =\, & \sum^{k+1}_{j=0} \binom{k+1}{j} D_m^{k+1-j} \p_{n-j}\ .
\end{align*}
Hence the formula holds for all $k=0,1,\dots,n$. On the other hand, if it is true for some $k\geq n$, we have
\begin{align*}
\p_n D_m^{k+1} =\, & \sum_{j=0}^n \binom{k}{j} D_m^{k-j} \p_{n-j} D_m \\
               =\, & \sum_{j=0}^{n-1} \binom{k}{j} D_m^{k-j} (D_m \p_{n-j} + \p_{n-j-1}) + \binom{k}{n} D^{k-n}_m \p_0 D_m \\
               =\, & \sum_{j=0}^{n-1} \binom{k}{j} D_m^{k+1-j} \p_{n-j} + \sum_{j=0}^{n-1} \binom{k}{j} D_m^{k-j} \p_{n-j-1} + \binom{k}{n} D^{k+1-n}_m \p_0 \\
               =\, & D^{k+1}_m \p_n + \sum_{j=1}^n \binom{k}{j} D_m^{k+1-j} \p_{n-j} + \sum_{j=1}^n \binom{k}{j-1} D_m^{k+1-j} \p_{n-j} \\
               =\, & D^{k+1}_m \p_n + \sum_{j=1}^n \binom{k+1}{j} D_m^{k+1-j} \p_{n-j} \\
               =\, & \sum_{j=0}^n \binom{k+1}{j} D_m^{k+1-j} \p_{n-j},
\end{align*}
and the result follows.
\end{proof}

\medskip

\subsection{Analytic Proof of Lemma \ref{2.4}}

\begin{proof}
Let $m$ be fixed and we proceed by induction on $k$. For $k=1$, if
\begin{equation*}
\norm{I} = i_{m-1} + 2i_{m-2} + \cdots + mi_0 \leq 1,
\end{equation*}
the only possible multi-indices are $I^m = (0,0,\dots,0)$ and $I^m = (1,0,\ldots,0)$. Then the right hand side reduces to $D_m$, agreeing with the left hand side.

Assume that the result holds for some $1\leq{k}\leq{m-2}$. Then,
\begin{align*}
D_m^{k+1} =\, & (D_{m-1} + p_m \p_{m-1} ) \sum_{\norm{I} \leq k} a_{I}^{(k)} p^{\abs{I}}_m D^{k-\norm{I}}_{m-1} \p^{I} \\
          =\, & \sum_{\norm{I} \leq k} a_{I}^{(k)} p^{\abs{I}}_m D^{k+1-\norm{I}}_{m-1} \p^{I} + \sum_{\norm{I} \leq k} a_{I}^{(k)} p^{\abs{I}+1}_m \p_{m-1} D^{k-\norm{I}}_{m-1} \p^{I}.
\end{align*}
The idea is simple --- use Lemma \ref{2.3} to move the single derivative $\p_{m-1}$ in the second term to the far right and perform a shift in the ``dummy'' multi-index $I$. This is possible since the coefficient $a_I^{(k)}\neq0$ only when $I\geq0$ and $\binom{n}{r}\neq0$ only in the case $n\geq{r}$, for integers $n$ and $r$.

To carry out this argument, we observe that by Lemma \ref{2.3},
$$\p_{m-1}D_{m-1}^{k-\norm{I}}=\sum_{j=0}^{k-\norm{I}} \binom{k-\norm{I}}{j}D_{m-1}^{k-\norm{I}-j}\p_{m-1-j}$$
since $k-\norm{I}\leq k\leq m-1$. However, since $\displaystyle\binom{k-\norm{I}}{j}=0$ for $j>k-\norm{I}$, we also have
\begin{align*}
\p_{m-1}D_{m-1}^{k-\norm{I}} =\, & \sum_{j=0}^{m-1} \binom{k-\norm{I}}{j}D_{m-1}^{k-\norm{I}-j}\p_{m-1-j} \\
                               =\, & \sum_{j=1}^{m} \binom{k-\norm{I}}{j-1}D_{m-1}^{k+1-\norm{I}-j}\p_{m-j}.
\end{align*}
It follows that
\begin{align*}
D_m^{k+1} =\, & \sum_{\norm{I}\leq k} a_{I}^{(k)}p^{\abs{I}}_m D^{k+1-\norm{I}}_{m-1} \p^{I} 
               + \sum_{j=1}^m\sum_{\norm{I}\leq k} \binom{k-\norm{I}}{j-1}a_{I}^{(k)} p_m^{\abs{I}+1}D_{m-1}^{k+1-\norm{I}-j}\p_{m-j}\p^{I}. \\
\end{align*}
By writing $\p_{m-j}=\p^{e_j}$ where $e_j=(0,\dots,1,\dots,0)$, with its ``1'' appears in the $j$-th slot from the left, we have
\begin{align*}
D_{m}^{k+1}=\, & \sum_{\norm{I}\leq k}
               a_{I}^{(k)}p^{\abs{I}}_m D^{k+1-\norm{I}}_{m-1} \p^{I} \\
              & + \sum_{j=1}^m\sum_{\norm{I+e_{j}}\leq k+j} \binom{k+j-\norm{I+e_{j}}}{j-1}a^{(k)}_{(I+e_{j})-e_{j}} \cdot p_m^{\abs{I+e_{j}}}D_{m-1}^{k+1-\norm{I+e_{j}}}\p^{I+e_{j}}. \\
\end{align*}
Since we have set $a_{I}^{(k)}=0$ whenever $i_{m-j}<0$ for some $j$ with $1\leq{j}\leq{m}$, we may implicitly assume that $I-e_j\geq0$ in the summands. Hence,
\begin{align*}
D_m^{k+1} =\, & \sum_{\norm{I}\leq k} a_{I}^{(k)} p_m^{\abs{I}}D_{m-1}^{k+1-\norm{I}}\p^{I} 
               + \sum_{j=1}^{m} \sum_{\norm{I}\leq k+j} \binom{k+j-\norm{I}}{j-1} a_{I-e_{j}}^{(k)}p_m^{\abs{I}}D_{m-1}^{k+1-\norm{I}}\p^{I}.
\end{align*}
Using the fact that $\displaystyle \binom{k+j-\norm{I}}{j-1}=0$ for $\norm{I}>k+1$, we have
\begin{align*}
D_m^{k+1} =\, & \sum_{\norm{I}\leq k} a_{I}^{(k)} p_m^{\abs{I}}D_{m-1}^{k+1-\norm{I}}\p^{I} 
               + \sum_{j=1}^{m} \sum_{\norm{I}\leq k+1} \binom{k+j-\norm{I}}{j-1} a_{I-e_{j}}^{(k)}p_m^{\abs{I}}D_{m-1}^{k+1-\norm{I}}\p^{I} \\
          =\, & \sum_{\norm{I}\leq k} \left[a_{I}^{(k)}+\sum_{j=1}^{m}\binom{k+j-\norm{I}}{j-1}a^{(k)}_{I-e_{j}}\right]p_m^{\abs{I}}D_{m-1}^{k+1-\norm{I}}\p^{I}.
\end{align*}
Finally, we compute
\begin{align*}
    & a_{I}^{(k)}+\sum_{j=1}^{m} \binom{k+j-\norm{I}}{j-1}a_{I-e_{j}}^{(k)} \\
=\, & \dfrac{k!}{\norm{I}^*I!(k-\norm{I})!} 
    +\sum_{j=1}^{m} \dfrac{(k+j-\norm{I})!}{(j-1)!(k+1-\norm{I})!}\cdot\dfrac{k!}{\frac{\norm{I}^*}{j!}\cdot\frac{I!}{i_{m-j}}\cdot(k+j-\norm{I})!} \\
=\, & \dfrac{k!}{\norm{I}^*I!(k-\norm{I})!} + \sum_{j=1}^{m} \dfrac{ji_{m-j}k!}{\norm{I}^*I^m!(k+1-\norm{I})!} \\
=\, & \dfrac{k!}{\norm{I}^*I!(k+1-\norm{I})!} \left(k+1-\norm{I}+\sum_{j=1}^{m} ji_{m-j} \right) \\
=\, & \dfrac{(k+1)!}{\norm{I}^*I!(k+1-\norm{I})!} \\
=\, & a^{(k+1)}_{I}.
\end{align*}
Since the result is also true for $k+1$, it holds for all $k=1,\dots,m-1$.
\end{proof}

\subsection{Combinatorial Proof of Lemma \ref{2.4}}
\begin{proof}
By Lemma \ref{2.2}(a), $D_m^k$ can be expanded as a binomial
\begin{equation*}\begin{split}
D_m^k
&=(D_{m-1}+p_m\p_{m-1})^k\\
\end{split}\end{equation*}
which is equal to the sum of all possible strings of length $k$ formed by the alphabet $\{D_{m-1},p_m\p_{m-1}\}$. Each $\p_{m-1}$ followed by $D_{m-1}$ gives rise to more terms by Lemma \ref{2.2}(b). In what follows, in the commutation of $p_m\p_{m-1}D_{m-1}$, we call $p_mD_{m-1}\p_{m-1}$ the higher order term and $p_m\p_{m-2}$ the lower order one. Hence,
$$p_m\p_{m-1}D_{m-1}^{j-1}=p_m\p_{m-j}+\text{higher order terms}.$$

Suppose a term has partial derivatives $\p^I=\p_{m-j}^{i_{m-j}}\cdots\p_0^{i_0}$. Two observations are in order:
\begin{itemize}
\item Each $\p_{m-j}$ is associated to one and only one $p_m$. Hence $\p^I$ is multiplied by $p_m^{\abs{I}}$.
\item Each $\p_{m-j}$ is obtained as the lowest order term of $\p_{m-1}D_{m-1}^{j-1}$. The resulting number of $D_{m-1}$ plus $\norm{I}$ is then the constant $k$.
\end{itemize}
It suffices to count the number of times $p_m^{\abs{I}}D_{m-1}^{k-\norm{I}}\p^I$ appears.

We observe that the coefficient
$$\dfrac{k!}{\norm{I}^*I!}=a_I^{(k)}(k-\norm{I})!$$
counts the number of ways partitioning a $k$-element set into sub-classes whose numbers of elements are given by $I$, see, for example, 12(a) in Chapter 1 of Lov\'{a}sz\cite{L}. By dividing it by $(k-\norm{I})!$, the number that $D_{m-1}$ appears, we do not distinguish between the $D_{m-1}$'s. Therefore, $a_I^{(k)}$ is the number of ways forming permutations of $i_{m-1}$ many $(p_m\p_{m-1})$'s, $i_{m-2}$ many $(p_m\p_{m-1}D_{m-1})$'s, $i_{m-3}$ many $(p_m\p_{m-1}D_{m-1}^2)$'s, etc., as well as $(k-i_{m-1}-2i_{m-2}-\cdots-mi_0)$ many $D_{m-1}$'s.

In this setting, $I=(i_{m-1},\dots,i_{0})$ where $i_{m-j}$ is the number of $p_{m}\p_{m-1}$'s to be commuted with its (immediately) following $(j-1)$ $D_{m-1}$'s. To fix the idea, let us look at an example. Suppose $k=10$ and $I=(i_{m-1},i_{m-2},i_{m-3})=(3,1,1)$, i.e. consider
\begin{equation}\label{eq2.1}
p_m^5D_{m-1}^2\p_{m-1}^3\p_{m-2}\p_{m-3}.
\end{equation}
This could be a result of re-ordering (always taking the highest order terms), say,
\begin{equation}\label{eq2.2}
D_{m-1}(p_m\p_{m-2})D_{m-1}(p_m\p_{m-1})^3(p_m\p_{m-3}).
\end{equation}
In order that it comes into place, the lower order $\p_{m-2}$ and $\p_{m-3}$ must be originally $\p_{m-1}D_{m-1}$ and $\p_{m-1}D_{m-1}^2$. Thus the corresponding binomial expansion is
\begin{equation}\label{eq2.3}
D_{m-1}(p_m\p_{m-1}D_{m-1})D_{m-1}(p_m\p_{m-1})^3(p_m\p_{m-1}D_{m-1}^2),
\end{equation}
or
$$D_{m-1}(p_{m}\p_{m-1})D_{m-1}^2(p_{m}\p_{m-1})^4D_{m-1}^2,$$
one of the many terms in $(D_{m-1}+p_m\p_{m-1})^{10}$.

Since \eqref{eq2.2} has a one-to-one correspondence to \eqref{eq2.3} when $I$ is fixed, the total number of terms of the form \eqref{eq2.1} is given by $a_I^{(k)}$, as desired.
\end{proof}


\end{document}